\theoremstyle{plain}
\newtheorem{theorem}{Theorem}[section]
\newtheorem{proposition}[theorem]{Proposition}
\newtheorem{definition}[theorem]{Definition}
\newtheorem{lemma}[theorem]{Lemma}
\newtheorem{corollary}[theorem]{Corollary}
\newtheorem{remark}[theorem]{Remark}
\newtheorem{example}[theorem]{Example}
\newcommand{\A}{\mathbb A}
\newcommand{\Af}{\mathbb{A}_f}
\newcommand{\aid}{\mathfrak a}
\newcommand{\Ann}{\mathrm{Ann}}
\newcommand{\dA}{d_{\mathbb{A}_f}}
\newcommand{\C}{\mathbb C}
\newcommand{\car}{\mathfrak 1}
\newcommand{\Char}{\mathrm{Char}}
\newcommand{\Co}{\mathrm C}
\newcommand{\D}{\mathcal D}
\newcommand{\Dkl}{\D^\ell_k(\Af)}
\newcommand{\F}{\mathcal F}
\newcommand{\lcm}{\mathrm{lcm}}
\newcommand{\N}{\mathbb N}
\newcommand{\Pp}{\mathbb P}
\newcommand{\Q}{\mathbb Q}
\newcommand{\Qc}{\overline \Q}
\newcommand{\Qp}{\Q_p}
\newcommand{\R}{\mathbb R}
\newcommand{\Real}{\mathrm{Re}}
\newcommand{\UC}{\mathbb{S}^1}
\newcommand{\Z}{\mathbb Z}
\newcommand{\Zp}{\Z_p}
\newcommand{\Zprod}{\prod_{p\in \Pp} \Zp}
\newcommand{\Zz}{\widehat{\Z}}
\newcommand{\abs}[1]{\left\vert#1\right\vert}
\newcommand{\mTo}{\longmapsto}
\newcommand{\norm}[1]{\left\Vert#1\right\Vert}
\newcommand{\normAf}[1]{\left\Vert#1\right\Vert_{\Af} }
\newcommand{\ord}{\mathrm{ord}}
\newcommand{\To}{\longrightarrow}
\date{\today}
\begin{document}

\title[Harmonic Analysis on the Ad\`ele Ring of $\Q$]{Harmonic Analysis on the Ad\`ele Ring of $\Q$}
\author[V.A. Aguilar--Arteaga, M. Cruz--L\'opez and S. Estala--Arias]
{Victor A. Aguilar--Arteaga$^*$, Manuel Cruz--L\'opez $^{**}$ and Samuel Estala--Arias $^{***}$}

\address{$^{*}$ Departamento de Matem\'aticas, CINVESTAV, Unidad Quer\'etaro, 
M\'exico} 
\email{aguilarav@math.cinvestav.mx}

\address{$^{**}$ Departamento de Matem\'aticas, Universidad de Guanajuato,
Jalisco S/N Mineral de Valenciana, Guanajuato, Gto.\ C.P.\ 36240, M\'exico} 
\email{manuelcl@ugto.mx}

\address{$^{***}$ IMATE-JU, UNAM, M\'exico} 
\email{samuelea82@gmail.com}

\subjclass[2010]{11E95, 11R56, 13J10, 22B05, 22B10, 28C10, 43A25, 43A40.}

\keywords{Harmonic analysis, locally compact Abelian groups, $p$--adic theory, ad\`ele rings.}

\begin{abstract} 

The ring of finite ad\`eles $\Af$ of the rational numbers $\Q$ is obtained in this article as a completion of $\Q$ with respect to a certain non--Archimedean metric. This ultrametric allows to represent any finite ad\`ele as a series generalizing $m$--adic analysis. The description made provides a new perspective for the adelic Fourier analysis on $\Af$, which also permits to introduce new oscillatory integrals. By incorporating the infinite prime, the mentioned results are extended to the complete ring of ad\`eles $\A$ of $\Q$.

\end{abstract} 
\maketitle 

\section{Introduction}
\label{introduction}

Since the introduction of the $p$--adic fields $\Qp$ by K. Hensel, the development of 
$p$--adic theory has been a very fruitful subject of study. For each prime number $p$, 
$\Qp$ is a non--Archimedean completion of the rational numbers $\Q$. Putting together all these completions with the Archimedean one, $\R$, the ring of ad\`eles $\A$ of $\Q$ is constituted. The ring of ad\`eles contains complete information on the arithmetics of the rational numbers.

The fields $\Qp$ are non--Archimedean local fields which are complete with respect to an ultrametric whose unit ball centred at zero  is the maximal compact and open subring 
$\Zp$, the ring of $p$--adic integers. The ad\`ele ring $\A$ is  the direct product 
$\R\times \Af$, where $\Af$ is the so called ring of finite ad\`eles and it constitutes the totally disconnected part of $\A$. The ring of finite ad\`eles $\Af$ is classically defined as the restricted direct product of the fields $\Qp$, over all prime numbers, with respect to the corresponding rings of integers. The additive structure of $\Af$ and $\A$ makes them  locally compact Abelian topological groups whose harmonic analysis has played a fundamental role in algebraic number theory  (see \cite{Tat}, \cite{McF}, \cite{GM} \cite{Rob}). More recently, several studies of pseudodifferential equations, stochastic processes and others analytical tools on the ring of ad\`eles $\A$ and other adic subrings of $\A$ have been done (see \cite{CL}, \cite{CE}, \cite{DRK}, \cite{KKS}, \cite{KR}, \cite{Evd}, \cite{Urb}, \cite{Yas}, \cite{Zun}). 

Both ad\`eles rings $\A$ and $\Af$, being traditional objects in number theory, have   several equivalent descriptions which provide different models of them. For instance, $\Af$ can be seen as the smallest second countable locally compact topological ring which contains all $p$--adic completions of $\Q$. From this perspective, the basic goal of this article is twofold: to present $\Af$ as a general adic ring allowing series representations and to develop the adelic Fourier analysis on $\Af$ generalizing $m$--adic analysis (\cite{VVZ}, \cite{AKS}, \cite{Koc}, \cite{DZ}). In order to  extend these results to the complete ad\`ele ring $\A$, the infinite prime is considered. 
 
Adic series representations lead to describe $\Af$ as a second countable locally compact totally disconnected topological ring and to see it is an ultrametric space (\cite{HR}). Several other ultrametrics on $\Af$ and $\A$ have been considered in different works such as \cite{CE}, \cite{Str}, \cite{McF} and \cite{Zun}. The ultrametric chosen here follows from a general framework on locally compact totally disconnected Abelian groups (\cite{ACE}, \cite{SC}). This ultrametric allows to evaluate some oscillatory integrals as generalized Dirichlet series related to the second Chebyshev function (see Section \ref{characters_integration}).  
 
Series representations go back to the original construction by  Hensel and their introduction is due to Pr\"ufer, von Neumann, Novoselov, among others (see e.g., \cite{HR} and \cite{DZ}). The arithmetics of polyadic integer numbers have been studied carefully in several works (\cite{Bro}). In particular, the adelic topology of integers was considered by Furstenberg (\cite{Fur}). However, the case of the finite ad\`ele ring has not been considered before in complete detail. The case of the finite ad\`ele ring is a limit case in the sense that any other adic ring is contained on this ring. 
 
Let us summarize briefly the content of this article. To begin with the analysis, we introduce extended second Chebyshev and von Mangoldt functions $\psi(n)$ and 
$\Lambda(n)$ over $\Z$. These functions provide a totally directed system of neighborhoods of zero $\{e^{\psi(n)}\Z\}_{n\in\Z}$ for an additive invariant topology on 
$\Q$ whose completion is known to be the finite ad\`ele ring (Compare with \cite{GGPH}). With these ingredients, any non-zero finite ad\`ele $x$  can be written as an adic convergent series 
\[ x = \sum_{k=\gamma}^\infty a_k  e^{\psi(k)}, \qquad (\gamma \in \Z)  \]
with $a_\gamma\neq 0$ and $a_k\in \{0,1,\ldots,e^{\Lambda(k+1)} - 1\}$, which allow us to state the general properties of $\Af$ as in the pioneering work by V. S. Vladimirov  \cite{Vla}. A fractional part function $\{\cdot\}_{\Af}:\Af \To \Q$ is introduced in order to define a canonical additive character $\chi(x)=e^{2\pi i \{x\}_{\Af}}$ on $\Af$. With this character at hand it is shown that $\Af$ is autodual in the sense of Pontryagin. Additionally, the chosen filtration provides a natural description of the Bruhat--Schwartz spaces of test functions $\D(\Af)$ as nuclear spaces and permits the study of the Fourier transform, the Fourier inversion formula and the Parseval--Steklov identity on 
$\D(\Af)$ and $L^2(\Af)$. 

Considering the place at infinity $\R$, a description of the ring of ad\`eles $\A$ as a locally compact Abelian topological ring is given. The Bruhat--Schwartz space $\D(\A)$ is an inductive limit of Fr\'echet spaces which is isomorphic to the algebraic and tensor product $\D(\R)\otimes \D(\Af)$, where $\D(\R)$ is the Schwartz space of $\R$. The 
Fourier transform on $\A$, given by $\F_{\A}=\F_{\R}\otimes \F_{\Af}$ is an isomorphism on $\D(\A)$. The Fourier inversion formula and the Parseval--Steklov identity are satisfied.  

The space of square integrable functions $L^2(\A)$ on $\A$ is a separable Hilbert space since it is the Hilbert tensor product space 
\[ L^2(\A) \cong L^2(\R) \otimes L^2(\Af) \] 
and $L^2(\R)$ and $L^2(\Af)$ are separable Hilbert spaces (see \cite{RS}). From this decomposition, the following fundamental and classical result is obtained.

\noindent \textbf{Theorem: \label{fourier_transformA}}
The Fourier transform $\F:L^2(\A)\To L^2(\A)$ is an isometry and the Fourier inversion formula and the Parseval--Steklov identities hold on $L^2(\A)$.

Finally, it is very important to say that given any completion of the rational numbers by a filtration similar to the one considered here conduces to an analogous harmonic analysis on $\Af$, and therefore on $\A$.
 
The manuscript is arranged as follows. In Section \ref{finite_adeles} the ring $\Af$ is introduced as a completion of the rational numbers $\Q$. The additive group of characters of $\Af$ and the basic theory of functions and integration is done in Section \ref{characters_integration}. The Bruhat--Schwartz spaces and the theory of Fourier analysis are presented in Section \ref{fourier_analysisAf}. Finally, in Section \ref{fourier_analysisA} the theory is done on the complete ring of ad\`eles.

\section[The finite ad\`ele ring $\Af$ as a completion of $Q$]{The finite ad\`ele ring $\Af$ as a completion of $\Q$}
\label{finite_adeles}

In this Section the finite ad\`ele ring $\Af$ is depicted as a completion of the rational numbers $\Q$. First, an arithmetical function related to the second Chebyshev function is introduced in order to describe a family of additive subgroups and an ultrametric on $\Q$. The completion of the rational numbers with respect to this ultrametric is shown to be
a second countable locally compact topological Abelian ring isomorphic to the finite 
ad\`ele ring. For a comprehensive introduction to the theory of  the ring of ad\`eles we quote the books \cite{Wei}, \cite{RV}, \cite{Lan} and the classical treatises \cite{GGPH} and \cite{Tat}. 

\subsection[An extended second Chebyshev function]{An extended second Chebyshev function}
\label{extended_Chebyshev}

Denote by $\N=\{1,2,\ldots\}$ the set of natural numbers and let $\Pp\subset \N$ be the set of prime numbers. Recall the following classical arithmetical functions (see \cite{Apo}). Let $\psi(n)$ denote the \textsf{second Chebyshev function} defined by the relation 
\[ e^{\psi(n)} = \lcm(1,2,\ldots,n) \qquad (n\in \N), \]
where $\lcm(a,b)$ denotes the least common multiple of $a$ and $b$.
Write $\Lambda(n)$ for the \textsf{von Mangoldt function} given by 
$$
\Lambda(n) = 
\begin{cases} 
\log p & \text{ if } n=p^k \text{ for some } p\in \Pp \text{ and integer } k\geq 1,\\ 
0 & \text{otherwise}.
\end{cases}
$$ 
These arithmetical functions are related by the equivalent equations
$$ 
\psi(n) = \sum_{k=1}^n \Lambda(k) \quad \text{and} \quad 
e^{\psi(n)} = \prod_{k =1}^n e^{\Lambda(k)}. 
$$ 

For any integer number $n$, define the second symmetric Chebyshev function by
\begin{equation*}
\psi(n) = 
\begin{cases} 
\frac{n}{\abs{n}} \psi(\abs{n}) &  \text{ if } n\neq 0, \\
0  & \text{ if } n=0, 
 \end{cases}
\end{equation*}
and the symmetric von Mangoldt function by (extending) the relation
\[ e^{\Lambda(n)} = \frac{e^{\psi(n)}}{e^{\psi(n-1)}}. \]
That is,
\begin{equation*}
\Lambda(n) = 
\begin{cases} 
\Lambda(n) & \text{ if } n > 0, \\
\Lambda(\abs{n-1})=\Lambda(\abs{n}+1) & \text{ if } n\leq 0. 
\end{cases}
\end{equation*}

Notice that if $e^{\psi(n)} < e^{\psi(n+1)}$ then $e^{\psi(n+1)}/e^{\psi(n)}$ is a prime number given by $e^{\Lambda(n+1)}$; otherwise, $e^{\psi(n)} = e^{\psi(n+1)}$ and $e^{\Lambda(n+1)}=1$. The values where the first case occurs form the set
$$ 
\mathcal{C} = \left\{ -p^{l} : l\in \N,\; p\in \Pp \right\} \cup 
\left\{ p^{l}-1 : l \in \N,\; p\in\Pp \right\}. 
$$  
For the purposes of the article the following change of parameter on these values is useful:

\begin{remark}
\label{ramification_index}
Let $\rho:\Z \To \mathcal{C}$ be the unique increasing bijective function such that $e^{\psi(\rho(n))}$ is a strictly increasing function with $e^{\psi(\rho(0))}=1$.
 
In order to simplify notation we will write $e^{\psi(n)}$ and $e^{\Lambda(n)}$ instead of  $e^{\psi(\rho(n))}$ and $e^{\Lambda(\rho(n))}$, respectively. With this notation, 
$e^{\psi(n+1)}/e^{\psi(n)}$ is a positive prime number $e^{\Lambda(n+1)}$. Moreover, for any integers $n > m$, the functions $\psi(n)$ and $\Lambda(n)$ satisfy the relations
\begin{equation*}
\label{ramificacion}
\psi(n) - \psi(m) = \sum_{k=m+1}^{ n} \Lambda(k) \quad \text{ and } \quad 
e^{\psi(n)}/e^{\psi(m)} = \prod_{k =m+1}^n e^{\Lambda(k)}. 
\end{equation*}
\end{remark}

\begin{remark}
\label{cofinalN}
From the definition of the extended second Chebyshev function, it follows that the sequences $(e^{\psi(n)})_{n=0}^\infty$ and $(1/e^{\psi(-n)})_{n=0}^\infty$ coincides, are totally ordered by division and cofinal with $\N$, i.e. given any natural number $N$, there exists $l\in \N$ such that $N$ divides $e^{\psi(l)}$.
\end{remark}

The following observation is of great importance to the construction given on the next section.

\begin{remark}
Using a similar procedure as the one used to write any rational number with respect to a given integer base, any positive rational number $q$ admits a unique representation as a finite sum
\[ q = \sum_{k=\gamma}^N a_k e^{\psi(k)}, \qquad (q\in \Q_{+}, \,\gamma \in \Z) \]
with $a_\gamma\neq 0$ and $a_k\in \{0,1,\ldots,e^{\Lambda(k+1)} - 1\}$.
\end{remark}

\subsection[The ring of finite adelic numbers]{The ring of finite adelic numbers}
\label{adelic_numbers}

To each integer number $n$, there corresponds an additive subgroup 
$e^{\psi(n)}\Z$ of the rational numbers $\Q$. If $n$ is nonnegative, $e^{\psi(n)}\Z$ is an ideal of $\Z$, otherwise if $n$ is negative, $e^{\psi(n)}\Z$ is a fractional ideal of $\Q$. The family of all such subgroups is totally ordered by inclusion. In fact, the filtration
$$
\{0\} \subset \cdots \subset e^{\psi(n)}\Z \subset \cdots \subset \Z 
\subset \cdots \subset e^{\psi(m)}\Z \subset \cdots \subset \Q \qquad (m<0<n)
$$  
has the properties 

\begin{equation*}
\label{union-intersection}
\bigcap_{n\in \Z} e^{\psi(n)}\Z = \{0\}\quad \text{ and } \quad 
\bigcup_{n\in \Z} e^{\psi(n)}\Z = \Q.
\end{equation*}

The collection $\{ e^{\psi(n)}\Z \}_{n\in \Z}$ is a countable neighborhood base of zero for a second countable additive invariant topology on $\Q$. In fact, since $\Q$ is countable, the topology on $\Q$ is generated by a numerable collection of open sets. This topology is called here the \textsf{finite adelic topology} of $\Q$. Since the intersection of the neighborhood base at zero consists only of the point zero, the finite adelic topology on 
$\Q$ is Hausdorff. 

From Remark \ref{cofinalN}, it follows that  the induced topology on $\Z$ coincides with the Furstenberg topology and any nonempty open set is given by the union of arithmetic progressions (see \cite{Fur}).  Furthermore, Remark \ref{cofinalN} implies that the finite adelic topology of $\Q$ is the topology generated by all additive subgroups of $\Q$ (see \cite{GGPH}). Hence, a nonempty subset $U\subset \Q$ is open if it is a union of rational arithmetics progressions in $\Q$, i.e. $U$ is a union of sets of the form $l\Z+q$, where 
$l$ and $q$ are fixed rational numbers. This implies that a set $U$ is open if and only if  given any $q\in \Q$ there exists a nonzero rational number $l$ such that 
$l\Z+q \subset U$. Notice that arithmetic progressions are both open and closed because
\[ l\Z+q =\Q \setminus \bigcup _{\substack{  j \in \Q  \\ 0<j<l }} l\Z+q+j. \]

A sequence $(a_n)_{n\in \N}$ of rational numbers is a \textsf{Cauchy sequence} in the finite adelic topology if for all $k\in \Z$ there exists $N>0$ such that, if $n,m >N$, then 
$a_n-a_m \in e^{\psi(k)}\Z$, i.e. if $e^{\psi(k)}$ divides $a_n-a_m$ in the sense that there exists an integer number $c$ such that $ce^{\psi(k)}=a_n-a_m$. For instance, constant sequences with all their elements equal to a fixed rational number, are Cauchy sequences. In particular, when the fixed constant is zero, it is called the zero sequence. A less trivial example of a Cauchy sequence is a sequence given by the partial sums 
\[ S_n=\sum_{k=\gamma}^{n} a_k e^{\psi(k)} \] 
of any formal  infinite series of the form 
\[ x = \sum_{k=\gamma}^\infty a_k e^{\psi(k)}, \qquad (\gamma \in \Z) \]
with $a_k\in \left\{ 0,1,\ldots,e^{\Lambda(k+1)} - 1 \right\}$ and $a_\gamma \neq 0$. In fact, if $n\geq m$,

\begin{align}
S_n-S_{m}	&= \sum_{k=m+1}^{n} a_k  e^{\psi(k)} \in e^{\psi(m)}\Z.    \notag
\end{align}

Let $\mathcal{C}(\Q)$ be the set of all Cauchy sequences. With the sum and multiplication by components, $\mathcal{C}(\Q)$ is a commutative ring with unitary element the constant sequence $(1)_{n\in \N}$. Two Cauchy sequences $(a_n)_{n \in \N}$ and $(b_n)_{n \in \N}$  of rational numbers are said to be equivalent if $(a_n-b_n)_{n\in \N}$ converges to the zero sequence, i.e. for all $l\in \Z$
\[ a_n-b_n \in e^{\psi(l)}\Z, \] 
for $n$ sufficiently large. A Cauchy sequence is trivial if it converges to zero. The set of all trivial Cauchy sequences $\mathcal{C}_0(\Q)$ is an ideal of the commutative ring 
$\mathcal{C}(\Q)$. The completion $\Qc$ of $\Q$ is the quotient ring 
$\mathcal{C}(\Q)/\mathcal{C}_0(\Q)$ with the topology given by the concept of Cauchy convergence. It follows that $\Qc$ is a complete topological space where the natural inclusion of $\Q$, with the finite adelic topology in $\Qc$, is dense.

\subsubsection[Adic series]{Adic series} 

Let $(a_n)_{n \in \N}$ be a Cauchy sequence of rational numbers. If $(a_n)_{n \in \N}$ is not equivalent to zero in $\Qc$, there exists a maximum element $\gamma \in \Z$ such that there is a subsequence $(b_n)_{n\in \N}$ with $b_n \notin e^{\psi(\gamma)}\Z$. Since $(b_n)_{n \in \N}$ is a Cauchy sequence, there is another subsequence $(c_n)_{n \in \N}$ such that $c_n \notin e^{\psi(\gamma)}\Z$ and $(c_n-c_{n-1}) \in e^{\psi(\gamma)}\Z$. Therefore, there exists a unique integer 
$l_{\gamma-1} \in \{1,\ldots,e^{\Lambda(\gamma)} - 1 \}$ such that 
$c_n-l_{\gamma-1}e^{\psi(\gamma-1)} \in e^{\psi(\gamma)}\Z$ for all $n$. Once more, since $c_n-l_{\gamma}e^{\psi(\gamma-1)}$ is a Cauchy sequence, there exists a subsequence $d_n$ of $c_n$ such that 
\[ (d_n-d_{n-1} ) \in e^{\psi(\gamma+1)}\Z. \] 
Hence, there exists a unique integer $l_{\gamma}$ such that 
$l_{\gamma} \in \{0,1,\ldots,e^{\Lambda(\gamma+1)} - 1 \}$ and 
$$
d_n - l_{\gamma-1}e^{\psi(\gamma-1)} - l_{\gamma} e^{\psi(\gamma)} \in e^{\psi(\gamma+1)}\Z
$$ 
for all $n$. Inductively, the partial sums 
\[ \left(\sum_{k=\gamma-1}^{\gamma-1+n} l_k  e^{\psi(k)}\right)_{n\in \N} \] 
form a sequence representing $(a_n)_{n\in \N}$ in $\Qc$. Therefore,
every nonzero element $x\in \Qc$ has a representative, also denoted by $x$, which can be uniquely written as a convergent series 
\[ x = \sum_{k=\gamma}^\infty l_k  e^{\psi(k)}, \qquad (\gamma \in \Z) \]
with $l_k\in \left\{ 0,1,\ldots,e^{\Lambda(k+1)} - 1 \right\}$, $l_\gamma \neq 0$.

\begin{remark}
Due to series representations, the ring $\Qc$ is called here the \emph{\textsf{ring of finite adelic numbers}}.  
\end{remark}

\subsubsection[Arithmetical operations]{Arithmetical operations}

From the definitions of addition and product of Cauchy sequences the following holds. Let $x,y\in \Qc$ be two finite ad\`eles with series representation
$$
x=\sum_{k=\gamma}^{\infty} a_k e^{\psi(k)} \quad \text{and}\quad 
y=\sum_{k=\gamma}^{\infty} b_k e^{\psi(k)}  
$$ 
and let $S_k(x)$ and $S_k(y)$ be the partial sums 
$$
S_k(x)=\sum_{l=\gamma}^{k} a_l e^{\psi(l)} \quad \text{and} \quad 
S_k(y)=\sum_{l=\gamma}^{k} b_l e^{\psi(l)}.
$$
The coefficients $\{c_k\}_{k=\lambda}^{\infty}$ of the series representation of the sum  
\[ z=x+y=\sum_{k=\gamma}^{\infty} c_k e^{\psi(k)} \] 
are given as follows: since 
$e^{\psi(\gamma)} e^{\Lambda(\gamma+1)} = e^{\psi(\gamma+1)}$, there exists a unique $c_\gamma \in \{0,1,\ldots, e^{\Lambda(\gamma+1)}-1 \}$ such that
$$
c_\gamma e^{\psi(\gamma)} \equiv a_{\gamma}e^{\psi(\gamma)} + b_{\gamma}e^{\psi(\gamma)} \mod e^{\psi(\gamma+1)}.
$$
Inductively, there exists $c_k \in \{0,1,\ldots, e^{\Lambda(k+1)} -1\}$, ($k\geq \gamma$), such that
\[ \sum_{l=\gamma}^{k} c_l e^{\psi(l)} = S_k(x)+S_k(y)\mod e^{\psi(k+1)}. \]
 
In addition, the coefficients $\{c_k\}_{k=\lambda}^{\infty}$ of the series representation of the product  
\[ z=xy=\sum_{k=\gamma}^{\infty} c_k  e^{\psi(k)} \] 
can be found inductively by considering
\[\sum_{l=\gamma}^{k} c_l  e^{\psi(l)}  = S_k(x)S_k(y)\mod  e^{\psi(k+1)}, \]
where $c_k \in \{0,1,\ldots , e^{\psi(k+1)}-1 \}$.

The sum of any two finite ad\`eles can be done by the procedure of `carrying'  if necessary in the series representation, similar to classical decimal expansions. The multiplication however uses a similar procedure but with a bit more subtle algorithm (see \cite{CE}).

\begin{proposition}
\label{addition_series}
Let $x,y\in \Qc$ be two finite ad\`eles with series representation
$$
x=\sum_{k=\gamma}^{\infty} a_k  e^{\psi(k)} \quad \text{and}\quad 
y=\sum_{k=\gamma}^{\infty} b_k  e^{\psi(k)}.  
$$ 
The coefficients $\{c_k\}_{k=\lambda}^{\infty}$ of the series representation of the sum  
\[ z=x+y=\sum_{k=\gamma}^{\infty} c_k  e^{\psi(k)} \] 
are given by
\[ c_k = a_k+b_k+ r_{k-1}\mod  e^{\Lambda(k+1)}, \]
where $c_k \in \{0,1,\ldots, e^{\Lambda(k+1)}-1 \}$,
$r_{\gamma-1}=0$ and, (for $k=\gamma, \ldots$), $r_k$ is one or zero depending on whether $a_k+b_k +r_{k-1}$ is greater than $e^{\Lambda(k+1)}$ or not. 
\end{proposition}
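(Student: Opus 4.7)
The plan is to prove by induction on $k$ the stronger statement that the partial sums satisfy the exact (not merely congruential) identity
\[
S_k(x)+S_k(y) \;=\; \sum_{l=\gamma}^{k} c_l\,e^{\psi(l)} \;+\; r_k\,e^{\psi(k+1)},
\]
where $c_l\in\{0,1,\ldots,e^{\Lambda(l+1)}-1\}$ and $r_k\in\{0,1\}$ are defined by the recursion in the statement. This is the algebraic content of the carrying procedure, and it both recovers the congruence relation already derived in the text and controls the overflow term.

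For the base case $k=\gamma$, I would multiply the equation $a_\gamma+b_\gamma = c_\gamma + r_\gamma e^{\Lambda(\gamma+1)}$ (with $r_{\gamma-1}=0$) by $e^{\psi(\gamma)}$ and use Remark~\ref{ramification_index}, namely $e^{\Lambda(\gamma+1)}e^{\psi(\gamma)}=e^{\psi(\gamma+1)}$, to obtain the identity at level $\gamma$. The point to check is only that $r_\gamma\in\{0,1\}$, which follows from the bound $a_\gamma+b_\gamma\le 2(e^{\Lambda(\gamma+1)}-1)<2e^{\Lambda(\gamma+1)}$.

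For the inductive step, assuming the identity at level $k$, I would add $(a_{k+1}+b_{k+1})e^{\psi(k+1)}$ to both sides and regroup the overflow:
\[
S_{k+1}(x)+S_{k+1}(y) \;=\; \sum_{l=\gamma}^{k} c_l\,e^{\psi(l)} \;+\; (r_k+a_{k+1}+b_{k+1})\,e^{\psi(k+1)}.
\]
The decisive estimate is $r_k+a_{k+1}+b_{k+1}\le 1+2(e^{\Lambda(k+2)}-1)<2e^{\Lambda(k+2)}$, which forces the Euclidean division
\[
r_k+a_{k+1}+b_{k+1} \;=\; c_{k+1}+r_{k+1}\,e^{\Lambda(k+2)}
\]
to yield $c_{k+1}\in\{0,\ldots,e^{\Lambda(k+2)}-1\}$ and $r_{k+1}\in\{0,1\}$, with $r_{k+1}=1$ iff $a_{k+1}+b_{k+1}+r_k\ge e^{\Lambda(k+2)}$; applying $e^{\Lambda(k+2)}e^{\psi(k+1)}=e^{\psi(k+2)}$ produces the identity at level $k+1$.

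To conclude, I would invoke the convergence theory established earlier in the section: since $r_k\in\{0,1\}$ the remainder satisfies $r_k e^{\psi(k+1)}\in e^{\psi(k+1)}\Z$, hence tends to $0$ in the finite adelic topology as $k\to\infty$. Therefore the partial sums $\sum_{l=\gamma}^{k} c_l e^{\psi(l)}$ converge to $x+y$, and by the uniqueness of the adic series representation noted in Section~\ref{adelic_numbers} these $c_l$ are exactly the coefficients of $z=x+y$. The only genuine subtlety is the uniform a priori bound on the carry, which is precisely what forces the digit alphabet at position $k$ to be $\{0,\ldots,e^{\Lambda(k+1)}-1\}$; everything else is bookkeeping.
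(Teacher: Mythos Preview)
Your proof is correct and follows essentially the same route as the paper: both establish the identity
\[
S_N(x)+S_N(y)=\sum_{k=\gamma}^{N} c_k\,e^{\psi(k)}+r_N\,e^{\psi(N+1)}
\]
(the paper writes $r_N e^{\psi(N)}$, an apparent typo) and then pass to the limit using the adelic topology. You simply make explicit the induction and the carry bound $r_k\in\{0,1\}$ that the paper asserts without justification, so your argument is a fleshed-out version of the same proof rather than a different one.
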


\begin{proof} 
Consider the truncated series representation of $x$ and $y$:
$$
S_N(x)=\sum_{k=\gamma}^{N} a_k  e^{\psi(k)} \quad \text{and} \quad 
S_N(y)=\sum_{k=\gamma}^{N} b_k  e^{\psi(k)}.
$$

Hence
\begin{align*}
S_N(x)+S_N(y) &= 
\sum_{k=\gamma}^{N} a_k  e^{\psi(k)} + \sum_{k=\gamma}^{N} b_k  e^{\psi(k)}  \\
&= \sum_{k=\gamma}^{N} (a_k+b_k) e^{\psi(k)}\\
&= \sum_{k=\gamma}^{N} c_k e^{\psi(k)} + r_{N} e^{\psi(N)}.
\end{align*} 

Therefore $S_N(x)+S_N(y)$ converges to $z=x+y$ when $N\to \infty$ and   
\[ z = \sum_{k=\gamma}^{\infty} c_ke^{\psi(k)}. \]
\end{proof}
     
\begin{remark} 
The difference of any two finite ad\`eles is computed by the procedure of `taken' if necessary.
\end{remark}
 
\begin{example}
\[ -1 = \sum_{n=0}^{\infty} (e^{\Lambda(n+1)} - 1)  e^{\psi(n)}. \]
\end{example}

\subsubsection[An adelic ultrametric]{An adelic ultrametric}
\label{adelic_ultrametric}

From Proposition \ref{addition_series} 
the function $\ord:\Qc \To \Z\cup \{\infty\}$ given by  
\[ \gamma(x)=\ord(x) := \min \{ k \, : \, a_k \neq 0 \} \] 
defines an \textsf{order} on $\Qc$, in the sense that it satisfies  properties:
\begin{enumerate} 
\label{order}
\item $\ord(x) \in \Z\cup \{\infty\}$, and $\ord(x)=\infty$ if and only if $x=0$,
\item $\ord(x+y)\geq \min \{\ord(x),\ord(y)\}$, for any $x,y\in \Qc$.
\end{enumerate} 

This allows to introduce a non--Archimedean metric on $\Qc$ given by
\[ d_{\Qc}(x,y) = e^{-\psi(\ord(x-y))} \qquad (x,y\in \Qc). \]

In fact, the second property of the function $\ord(\cdot)$ above implies
\[ d_{\Qc}(x,z)\leq \max\{d_{\Qc}(x,y),d_{\Qc}(y,z)\}, \]
for any $x,y,z \in \Qc$. The ultrametric $d_{\Qc}$ takes values in the set 
$\{ e^{\psi(n)} \}_{n\in \Z} \cup \{0\}$ and the balls centred at zero are the completions of the original filtration
$$
B(0,e^{\psi(n)}) = \left\{ x\in\Qc : x=\sum_{k=\gamma}^\infty a_k e^{\psi(k)} \text{ with } n\leq \gamma \right\}, \qquad (n \in \Z). 
$$
 
\begin{definition}  
The collection of \emph{\textsf{finite adelic integers}} $\Zz$ is the unit ball of $\Qc$:
\[ \Zz = \left\{ x \in \Qc : x = \sum_{k=0}^{\infty} a_k e^{\psi(k)} \right\}. \]
\end{definition}

From the definition of addition and multiplication, it follows that the finite adelic integers 
$\Zz$ is a compact and open subring of $\Qc$. It is also maximal, because 
$\normAf{x^2}>\normAf{x}$ for any $x\in \Qc$ which is not in $\Zz$. In addition, $\Zz$ contains the set of integer numbers as a dense subset. 

\begin{remark} 
\label{profinite}
From the construction of $\Qc$ and the fact that $e^{\psi(n)}\Z$ is an ideal of $\Z$, it follows that $\Zz$ is the \emph{\textsf{profinite ring completion}} of the integers $\Z$. Moreover, for any other case, $e^{\psi(n)}\Zz$ is the profinite group completion of the additive subgroup $e^{\psi(n)}\Z$ (see \cite{Wil}).   
\end{remark}

From a geometrical point of view, the non--Archimedean property shows that any point of a ball is a center and that two balls are disjoint or one is contained in the other. This property can be described algebraically as follows. For each $n\in \Z$, define 
$\aid^n = e^{-\psi(n)} \Zz$.\footnote{It is important to notice that these are not powers of the ring $\Zz$.} The collection $\{\aid^n\}_{n\in \Z}$ is a neighborhood base of zero for the finite adelic topology on $\Qc$. Each subgroup $\aid^n$ provides a disjoint partition
\[ \Qc = \bigcup_{x\in \Qc/ \aid^n} (x + \aid^n) \] 
where the union is taken over a complete (countable) set of representatives of the quotient 
$\Qc/\aid^n$.

For each $n\in \Z$ and $x\in \Qc$, let $B_n(x)$ be the ball with center at $x$ and radius $e^{\psi(n)}$. From the non--Archimedean property, each element of a ball is its center and any ball is compact and open. The balls $B_n(x)$ coincides with the sets $x+\aid^n$ and the decomposition above can be written as
\[ \Qc = \bigcup_{x\in \Qc / \aid^n } B_n(x), \] 
where the union is taken over a complete set of representatives of the quotient 
$\Qc / \aid^n$. 

A complete set of representatives of the quotient $\aid^n /\aid^{n-1}$ determines the partition 
\[ \aid^n = \bigcup_{x\in \aid^n /\aid^{n-1} } (x+\aid^{n-1}) \]
which induces a corresponding partition
\[ \Qc = \bigcup_{ \substack{y \in \Qc / \aid^n,\\ x \in \aid^n /\aid^{n-1}} } (y+x+\aid^{n-1}). \]

\begin{remark}
An equivalent way to describe $\Qc$ is the following: the function
\[ \gamma(q)= \min\left\{\,l \, : \, q \in e^{\psi(l)\Z}\,\right\} \] 
is an order over $\Q$ in the sense that it satisfies properties \ref{order} and the function 
\[ d_{\Q}(p,q) = e^{-\psi(\ord(p-q))}, \] 
for $p,q \in \Q$, is an ultrametric on $\Q$. By construction, the ultrametric $d_{\Q}$ is the restriction of $d_{\Qc}$ to $\Q$ and therefore the metric completion of $\Q$ with respect to $d_{\Q}$ is naturally isometric to $\Qc$. 
\end{remark}

As a nice consequence of writing any element of $\Qc$ as a convergent series, it is shown the following result characterising the compact subsets of $\Qc$.

\begin{proposition} 
The topological ring $\Qc$ satisfies the Heine--Borel property, i.e. a subset 
$K\subset \Qc$ is compact if and only if it is closed and bounded. Therefore $\Qc$ is a locally compact topological ring. 
\end{proposition}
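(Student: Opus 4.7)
The plan is to reduce the Heine--Borel property to the compactness of the balls centred at zero, $B_n(0)=\{x\in\Qc:\ord(x)\geq n\}$, and then to invoke the compactness of $\Zz$ noted in Remark \ref{profinite}.

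First I would check that every $B_n(0)$ is compact, splitting into two cases. For $n\geq 0$, the ball is the closed subset of $\Zz$ obtained by imposing $a_0=\cdots=a_{n-1}=0$ in the adic series, hence is closed in the compact space $\Zz$. For $n<0$, the adic expansion presents $B_n(0)$ as a finite disjoint union of translates of $\Zz$,
\[
B_n(0)\;=\;\bigsqcup_{(a_n,\ldots,a_{-1})}\Bigl(a_n\,e^{\psi(n)}+\cdots+a_{-1}\,e^{\psi(-1)}+\Zz\Bigr),
\]
the indexing set being finite because each digit $a_k$ ranges over $\{0,1,\ldots,e^{\Lambda(k+1)}-1\}$. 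Thus $B_n(0)$ is a finite union of compacts.

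Granting this, the proposition is routine. If $K$ is closed and bounded, boundedness together with the fact that $d_{\Qc}$ takes only the discrete set of values $\{e^{-\psi(n)}\}_{n\in\Z}\cup\{0\}$ forces $K\subseteq B_n(0)$ for some $n$, so $K$ is closed in a compact set and is therefore compact. The converse is the standard assertion that compact subsets of Hausdorff metric spaces are closed and bounded. Local compactness then follows by translation: $x+\Zz$ is a compact open neighbourhood of every $x\in\Qc$, because $\Zz$ is compact and open and addition is continuous.

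The only delicate step is the analysis of $B_n(0)$ for $n<0$: one must ensure that the decomposition above is genuinely disjoint (uniqueness of the adic expansion from Section \ref{adelic_numbers}) and that the indexing set is finite (finiteness of each digit alphabet). Both facts are already established, so no new ingredients are required beyond those developed earlier; the rest of the proof is essentially bookkeeping.
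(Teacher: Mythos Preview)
Your argument is correct, but it follows a different route from the paper. The paper gives a direct sequential-compactness proof: starting from an arbitrary bounded sequence $(x_k)$, it applies a pigeonhole/diagonal extraction on the adic digits (each digit alphabet $\{0,1,\ldots,e^{\Lambda(l+1)}-1\}$ being finite) to produce a convergent subsequence. In particular, the paper's proof does not presuppose compactness of $\Zz$; it effectively \emph{re}-establishes it as a special case of the argument.

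Your approach instead takes the compactness of $\Zz$ as already granted---via the profinite identification in Remark~\ref{profinite} (profinite rings are compact by Tychonoff), or via the sentence just preceding that remark---and then reduces every closed ball to either a closed subset of $\Zz$ or a finite union of $\Zz$-cosets. This is cleaner and more structural, and it makes the ``local compactness'' conclusion immediate. The trade-off is a mild dependence on an external fact (compactness of profinite completions), whereas the paper's digit-extraction argument is self-contained within the adic-series framework developed in Section~\ref{adelic_numbers}. Both are valid; yours is shorter, the paper's is more elementary.
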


\begin{proof}  
Since $\Qc$ is an ultrametric space, it is sufficient to prove that any closed and bounded set $K\subset \Qc$ is sequentially compact. Let 
$(x_k)_{k\geq 1}$ be a bounded sequence in $\Qc$ and write
$$
x_k = \sum_{l=\gamma(x_k)}^{\infty} a_k(l) e^{\psi(l)} \qquad 
\big( a_k(l) \in  \left\{ 0,1,\ldots,e^{\Lambda(l+1)} - 1 \right\} \big).
$$ 

Since $(x_k)_{k\geq 1}$ is bounded, the set $\{\gamma(x_k)\}$ is bounded from below, say by $\gamma_{0}$. If the set $\{\gamma(x_k)\}$ is unbounded from above, then 
$(x_k)_{k\geq 1}$ has a subsequence that converges to zero. If the set $\{\gamma(x_k)\}$ is bounded from above, it only takes a finite number of values. Therefore, for some 
$\gamma_0 \in \Z$, there exists a subsequence of $(x_k)_{k\geq 1}$ with elements of the form

$$
a_0 e^{\psi(\gamma_0)} + a_1 e^{\psi(\gamma_0+1)} + a_2 e^{\psi(\gamma_0+2)} + \cdots \qquad (a_l \in \{0,1,2,\ldots, e^{\Lambda(\gamma_0+l)}-1 \}, \, a_{0} \neq 0 ).
$$
Moreover, since $a_0$ can take only a finite number of values, there exist a definite 
\[ a_0\in \{1,2,\ldots, e^{\Lambda(\gamma_0)}-1 \} \] 
and a subsequence of the form 
$$
a_0 e^{\psi(\gamma_0)} + a_1 e^{\psi(\gamma_0+1)} + a_2 e^{\psi(\gamma_0+2)} + \cdots \qquad (a_l \in \{0,1,2,\ldots, e^{\Lambda(\gamma_0+l)}-1 \}, \, a_0 \neq 0).
$$
Since any other $a_k$ can take only a finite number of values, inductively, it is found a subsequence which converges to a nonzero limit point 
$$
x = \sum_{l=\gamma_0}^{\infty} a_l e^{\psi(l)} \qquad 
\big( a_l) \in  \left\{ 0,1,\ldots,e^{\Lambda(l+1)} - 1 \right\}\, a_0 \neq 0 \big).
$$
\end{proof}

\begin{remark} 
For any nonzero rational numbers $p$ and $q$, $p\Z \cdot q\Z = pq\Z$. Therefore, for any nonzero rational number $l$ the preimage of $l\Z$ under the multiplication 
$\Q\times\Q \To \Q$ is the union of all $p\Z \times q\Z \subset \Q^2$ such that $pq=l$. Since this union is an open set, any completion of the rational numbers by a filtration as above produces a topological ring.  
\end{remark}

\begin{remark} 
Since $\Qc$ is a topological ring, for any nonzero rational number $q$, the set $q\Zz$ is a compact and open subgroup of $\Qc$. It can be seen that any compact and open subgroup of $\Qc$ is of the form $q\Zz$ for some nonzero rational number $q$.
\end{remark}

\subsubsection[A Haar measure on finite adelic numbers]{A Haar measure on finite adelic numbers}
\label{haar_measure}

Consider the additive function $dx$ on balls given by 
$$ 
dx(B_n(x)) = dx(\aid^n) := e^{\psi(n)} =
\begin{cases}
[\aid^n : \Zz] & \text{ if } n > 0, \\
[\Zz : \aid^n] & \text{ if } n \leq 0.
\end{cases}
$$
for any $x\in \Qc$ and $n\in \Z$. The set of all balls of positive radius and the empty set  forms a semiring and the additive function $dx$ is a $\sigma$--finite premeasure. By Carath\'eodory's extension theorem, there exists a unique Borel measure on $\Qc$, also denoted by $dx$, which extends this formula. Since balls are compact and open, it follows that $dx$ is a Radon measure on $\Qc$. Moreover, by construction $dx$ is additive invariant and hence it is a Haar measure on $\Qc$.

By definition, this measure assigns total mass one to $\Zz$. Moreover, the following statement holds.
\begin{proposition}
\label{ChangeVariables} 
For any rational number $q$, the Haar measure of $q\Zz$ is equal to $q^{-1}$.  
\end{proposition}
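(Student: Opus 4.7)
The plan is to verify the formula first for positive integers $m$ and then extend to all positive rationals via the uniqueness of Haar measure up to a positive scalar. For $q<0$, the formula $q\Zz=|q|\Zz$ (which holds because $\Zz=-\Zz$) reduces the statement to the positive case; I would briefly flag this at the outset so the body of the proof can treat only positive $q$.

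For the integer case, I would prove $dx(m\Zz)=m^{-1}$ by exhibiting the disjoint coset decomposition
\[
\Zz \;=\; \bigsqcup_{i=0}^{m-1}\bigl(i+m\Zz\bigr);
\]
translation invariance of $dx$ together with $dx(\Zz)=1$ then forces $dx(m\Zz)=1/m$. To build the decomposition, apply Remark \ref{cofinalN} to choose $l\in\N$ with $m\mid e^{\psi(l)}$, so that $e^{\psi(l)}\Zz\subset m\Zz$. The series representation identifies $\Zz/e^{\psi(l)}\Zz$ with the finite set $\{0,1,\ldots,e^{\psi(l)}-1\}$, and the division algorithm in $\Z$ reduces each such representative modulo $m$ to some $i\in\{0,\ldots,m-1\}$. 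Distinctness of the resulting cosets follows from $\Q\cap\Zz=\Z$, which is immediate from the ultrametric description (a rational lies in $B(0,1)$ iff it lies in $e^{\psi(0)}\Z=\Z$): if $0<i<m$ satisfied $i\in m\Zz$, then $i/m\in\Q\cap\Zz=\Z$, contradicting $0<i<m$.

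For the extension, I would observe that for any nonzero rational $a$, multiplication by $a$ is a homeomorphism of $\Qc$, so the set function $E\mapsto dx(aE)$ is a nonzero translation-invariant Radon measure. By uniqueness of Haar measure up to scalar, $dx(aE)=c(a)\,dx(E)$ for some $c(a)>0$, and commutativity of multiplication makes $c\colon\Q^{*}\To\R_{>0}$ a homomorphism. Evaluating at $E=\Zz$, the integer case gives $c(m)=1/m$ for every positive integer $m$, and hence $c(1/m)=m$. For any positive rational $q=a/b$, multiplicativity yields $c(q)=c(a)c(1/b)=b/a=q^{-1}$, and taking $E=\Zz$ once more produces $dx(q\Zz)=c(q)=q^{-1}$.

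The main obstacle is the coset decomposition in the first step: it amounts to the profinite-completion identity $\Zz/m\Zz\cong\Z/m\Z$ from Remark \ref{profinite}, which must be unpacked carefully from the series representation of $\Zz$ and the identification $\Q\cap\Zz=\Z$. After that the argument is entirely formal, driven by multiplicativity of the modular function of scaling.
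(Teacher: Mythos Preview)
Your argument is correct and, at its core, matches the paper's: both reduce to computing the index $[\Zz:m\Zz]=m$ for positive integers $m$. The paper does this in one line by invoking Remark~\ref{profinite} (the completion of $l\Z$ is $l\Zz$, and indices are preserved under profinite completion), whereas you unpack the same fact by building the coset decomposition $\Zz=\bigsqcup_{i=0}^{m-1}(i+m\Zz)$ from the series representation and the identification $\Q\cap\Zz=\Z$.

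Where you genuinely add something is in the passage from integers to arbitrary nonzero rationals. The paper simply asserts ``it is only necessary to compute the index for the subgroups $l\Zz$ for any natural integer $l$'' and stops there; your modular-function argument (pushing Haar measure forward under multiplication by $a$ and using multiplicativity of the resulting scalar $c(a)$) makes this step rigorous and transparent. You also correctly flag the sign issue for $q<0$, which the paper's statement glosses over. So your proof is a more complete version of the same idea, trading the one-line profinite citation for an explicit construction, and filling in the extension step that the paper leaves to the reader.
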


\begin{proof}
It is only necessary to compute the index for the subgroups $l\Zz$ for any natural integer $l$. First, $l\Z \cap e^{\psi(n)}\Z=\text{mcd}(l,e^{\psi(n)})\Z=l\Z$ for $n$ sufficiently large. The completion of $\l\Z$ is $l\Zz$ and
\[ [\Zz:l\Zz] = [\Z:l\Z]=l. \]
\end{proof}

\subsubsection[Non--Archimedean topology of the ring of finite ad\`eles numbers]{Non--Archimedean topology of the ring of finite ad\`eles numbers}
\label{adelic_topology}

In summary, $\Qc$ is a second countable, locally compact, totally disconnected, commutative, topological ring. The following properties holds (see picture \ref{figuraA}):

\begin{enumerate}
\item The inclusion $\Q\hookrightarrow \Qc$ is dense and $\Qc$ is a separable topological space.
\item $\Qc$ has a non--Archimedean metric $d_{\Qc}$ such that $(\Qc,d_{\Qc})$ is a complete ultrametric space and therefore it is a totally disconnected topological space.
\item A Haar measure on $\Qc$ can be chosen in such a way that the maximal compact and open subring $\Zz$ has total mass one.
\item The ultrametric $d_{\Qc}$ is the unique additive invariant ultrametric on $\Qc$ whose balls centered at zero is precisely the collection $\{\aid^n\}_{n\in \Z}$, and such that the Haar measure of any ball is equal to its radius. In particular, $\Zz$ has diameter one.
\item For any integer $n$, if $x\in B_n(y)$, $B_n(x)=B_n(y)$.
\item The set of  all balls of any positive fixed radius form a numerable partition of $\Qc$. 
\item The set of all balls of positive radius are numerable. Therefore, the topology of 
$\Qc$ is generated by a numerable base and the Borel $\sigma$--algebra of $\Qc$ is separable.  
\end{enumerate}

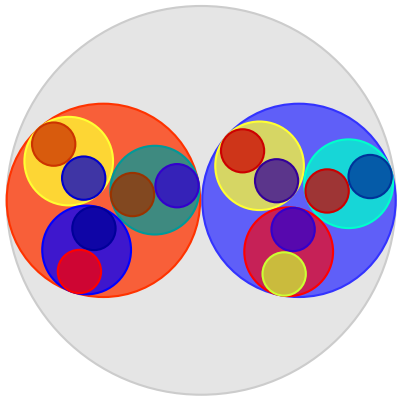
\begin{figure}[ht!]
\label{figuraA}
\begin{center}
\newrgbcolor{cccccc}{0.8 0.8 0.8}
\newrgbcolor{ttttff}{0.2 0.2 1}
\newrgbcolor{ffttqq}{1 0.2 0}
\newrgbcolor{fffftt}{1 1 0.2}
\newrgbcolor{qqzzzz}{0 0.6 0.6}
\newrgbcolor{qqffcc}{0 1 0.8}
\newrgbcolor{ccttqq}{0.8 0.2 0}
\newrgbcolor{qqqqcc}{0 0 0.8}
\newrgbcolor{zzttqq}{0.6 0.2 0}
\newrgbcolor{ttqqcc}{0.2 0 0.8}
\newrgbcolor{qqqqzz}{0 0 0.6}
\newrgbcolor{ccqqqq}{0.8 0 0}
\newrgbcolor{ttqqzz}{0.2 0 0.6}
\newrgbcolor{qqttzz}{0 0.2 0.6}
\newrgbcolor{ccfftt}{0.8 1 0.2}
\psset{unit=.10cm,algebraic=true,dotstyle=o,dotsize=3pt 0,linewidth=0.8pt,arrowsize=3pt 2,arrowinset=0.25}
\begin{pspicture*}(-34.59,-33.95)(38.2,32.11)
\pscircle[linecolor=cccccc,fillcolor=cccccc,fillstyle=solid,opacity=0.5](0,0){26}
\pscircle[linecolor=ttttff,fillcolor=ttttff,fillstyle=solid,opacity=0.75](13,0){13}
\pscircle[linecolor=ffttqq,fillcolor=ffttqq,fillstyle=solid,opacity=0.75](-13,0){13}
\pscircle[linecolor=fffftt,fillcolor=fffftt,fillstyle=solid,opacity=0.75](-17.61,5.22){6.03}
\pscircle[linecolor=qqzzzz,fillcolor=qqzzzz,fillstyle=solid,opacity=0.75](-6.17,1.36){6.04}
\pscircle[linecolor=blue,fillcolor=blue,fillstyle=solid,opacity=0.75](-15.23,-6.6){6.04}
\pscircle[linecolor=fffftt,fillcolor=fffftt,fillstyle=solid,opacity=0.75](7.76,4.6){6.03}
\pscircle[linecolor=qqffcc,fillcolor=qqffcc,fillstyle=solid,opacity=0.75](19.6,2.21){6.04}
\pscircle[linecolor=red,fillcolor=red,fillstyle=solid,opacity=0.65](11.63,-6.83){6.04}
\pscircle[linecolor=ccttqq,fillcolor=ccttqq,fillstyle=solid,opacity=0.75](-19.61,7.48){3.02}
\pscircle[linecolor=qqqqcc,fillcolor=qqqqcc,fillstyle=solid,opacity=0.75](-15.62,2.96){3.02}
\pscircle[linecolor=zzttqq,fillcolor=zzttqq,fillstyle=solid,opacity=0.75](-9.13,0.77){3.02}
\pscircle[linecolor=ttqqcc,fillcolor=ttqqcc,fillstyle=solid,opacity=0.75](-3.21,1.95){3.02}
\pscircle[linecolor=qqqqzz,fillcolor=qqqqzz,fillstyle=solid,opacity=0.75](-14.26,-3.74){3.02}
\pscircle[linecolor=red,fillcolor=red,fillstyle=solid,opacity=0.75](-16.19,-9.46){3.02}
\pscircle[linecolor=ccqqqq,fillcolor=ccqqqq,fillstyle=solid,opacity=0.75](5.5,6.59){3.01}
\pscircle[linecolor=ttqqzz,fillcolor=ttqqzz,fillstyle=solid,opacity=0.75](10.02,2.61){3.01}
\pscircle[linecolor=ccqqqq,fillcolor=ccqqqq,fillstyle=solid,opacity=0.75](16.74,1.25){3.02}
\pscircle[linecolor=qqttzz,fillcolor=qqttzz,fillstyle=solid,opacity=0.75](22.46,3.17){3.02}
\pscircle[linecolor=ttqqcc,fillcolor=ttqqcc,fillstyle=solid,opacity=0.75](12.22,-3.87){3.02}
\pscircle[linecolor=ccfftt,fillcolor=ccfftt,fillstyle=solid,opacity=0.7](11.03,-9.79){3.02}
\end{pspicture*}
\end{center}
\caption{The ring of finite ad\'eles}
\end{figure}

\subsection[Finite adelic numbers are isomorphic to the finite ad\`eles ring]{Finite adelic numbers are isomorphic to the finite ad\`eles ring }
\label{adelic_numbers-adeles}

In this paragraph the topological ring $\Qc$ is shown to be naturally isomorphic to the finite ad\`ele ring of $\Q$. For a complete description of this isomorphism see \cite{CE}.

\subsubsection[The ring of finite ad\`eles]{The ring of finite ad\`eles} 
\label{ring_Af} 

For each prime $p\in \Pp$ denote by $\Zp$ and $\Qp$ the ring of $p$--adic integers and the field of $p$--adic numbers, respectively. The \textsf{finite ad\`ele ring} $\Af$ of the rational numbers $\Q$ is classically defined as the restricted direct product of the fields 
$\Qp$ with respect to the maximal compact and open subrings $\Zp$. That is 
$$
\Af = \left\{ (x_p)_{p\in \Pp}\in  \prod_{p\in \Pp}  \Qp : x_p \in \Zp\; 
\text{for all but finitely many primes}\; p\in \Pp \right \}.
$$
with a topology given by the restricted direct product topology. 

The restricted direct product topology is the unique additive invariant topology generated by the neighborhood base of zero  
\[ \mathcal{N} = \left\{ q\Zprod : q\in \Q, \, q \neq 0 \right\}, \] 
which consists of compact and open subgroups of $\Af$, by Tychonoff's theorem. With the restricted direct product topology, the topological ring $\Af$ is second countable, totally disconnected, locally compact and commutative. 
 
The diagonal inclusion of $\Q$ into $\Af$ is dense, so $\Q$ has a unique topology whose completion $\Qc$ is isomorphic as a topological ring to $\Af$ as stated in the next result:

\begin{proposition}
\label{isomorphism_adeles} 
There exists an isomorphism of topological rings
\[ \Qc \cong \Af,\] 
which preserves the inclusion of $\Q$ in both rings.
\end{proposition}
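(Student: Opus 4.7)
The strategy is to reduce the statement to the uniqueness of the Hausdorff completion by checking that the two topologies on $\Q$ — the finite adelic topology constructed in Section \ref{finite_adeles} and the topology inherited from the diagonal embedding $\Q \hookrightarrow \Af$ — coincide, and then to upgrade the resulting group isomorphism to a ring isomorphism by continuity.

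First I would compute the trace on $\Q$ of the neighborhood base of zero of $\Af$. For any nonzero $q \in \Q$ I claim $q\Zprod \cap \Q = q\Z$, since $x/q \in \Zprod \cap \Q$ if and only if the reduced denominator of $x/q$ is a $p$--adic unit at every prime $p$, i.e.\ $x/q \in \Z$. Consequently, the topology induced on $\Q$ by the diagonal inclusion has as a base of neighborhoods of zero the collection $\{q\Z : q\in \Q,\, q\neq 0\}$, which is exactly the topology generated by all nonzero additive subgroups of $\Q$.

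Next I would match this with the filtration $\{e^{\psi(n)}\Z\}_{n\in \Z}$ of Section \ref{adelic_numbers}. One direction is free: each $e^{\psi(n)}\Z$ is of the form $q\Z$, hence open in the adelic trace topology. For the converse, given $q = a/b \in \Q$ nonzero and in lowest terms, I need $n\in \Z$ with $e^{\psi(n)}\Z \subset q\Z$, equivalently $a \mid b\,e^{\psi(n)}$; since $\gcd(a,b)=1$ this reduces to $|a| \mid e^{\psi(n)}$, which is exactly supplied by the cofinality statement of Remark \ref{cofinalN}. Therefore the two neighborhood bases at $0$ are mutually refining, and the two topologies on $\Q$ agree.

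With both topologies identified, uniqueness of the Hausdorff completion of a topological abelian group produces a unique continuous group isomorphism $\varphi : \Qc \To \Af$ extending the identity on the dense subgroup $\Q$, whose inverse is also continuous. To promote $\varphi$ to an isomorphism of topological rings, I would use that multiplication $\Q \times \Q \To \Q$ extends continuously to both $\Qc \times \Qc \To \Qc$ (by Proposition \ref{addition_series} and the analysis of Section \ref{adelic_numbers}) and $\Af \times \Af \To \Af$ (by definition of the adele ring); the two extensions agree on the dense set $\Q \times \Q$ and hence on all of $\Qc \times \Qc$, so $\varphi$ is multiplicative. The main delicacy is the cofinality step — everything else is essentially formal uniqueness — but that point is already secured by Remark \ref{cofinalN}.
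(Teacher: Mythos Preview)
Your argument is correct, but the route is genuinely different from the paper's. The paper proceeds \emph{constructively}: it first identifies the unit ball via the Chinese Remainder Theorem, $\Zz \cong \Zprod$ (using Remark \ref{profinite}), then applies the group version of CRT level by level to obtain compatible isomorphisms $e^{\psi(n)}\Zz \cong e^{\psi(n)}\Zprod$ for every $n\in\Z$, and finally patches these together into a global ring isomorphism $\Qc\cong\Af$. Your argument instead identifies the two \emph{topologies} on $\Q$ directly (via the trace computation $q\Zprod\cap\Q=q\Z$ and the cofinality of Remark \ref{cofinalN}) and then invokes the universal property of the Hausdorff completion, upgrading to a ring map by density. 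Your approach is more conceptual and sidesteps the compatibility check between the level-wise CRT isomorphisms; the paper's approach is more explicit and yields the isomorphism on $\Zz$ in a form that is immediately useful elsewhere (e.g.\ for coordinate-wise computations). One small point worth making explicit in your write-up: for the uniqueness-of-completion step you also need that $\Af$ is complete (it is, being locally compact) and that $\Q$ is dense in $\Af$ --- the latter is stated in the paper just before the proposition, so you may simply cite it.
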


\begin{proof} 
From Remark \ref{profinite} the ring of finite adelic integers $\Zz$ is the profinite completion of $\Z$.  By the Chinese Remainder Theorem, there exists an isomorphism of  topological rings between $\Zz$ and $\Zprod$, which preserves the natural inclusion of the integers on both rings. Furthermore, by the group version of the Chinese Remainder Theorem, for each any integer number $n$, there exists a topological group isomorphism between $e^{\psi(n)}\Zz$ and $e^{\psi(n)}\Zprod$, which preserves the natural inclusion of the group $e^{\psi(n)}\Z=e^{\psi(n)}\Zprod \cap \Q$. These isomorphisms are compatible, due to their natural inclusion of the groups $e^{\psi(n)}\Z$, and extends to an isomorphism from $\Qc$ to $\Af$, which preserves the natural inclusion of $\Q$ in both rings.
\end{proof}

\begin{remark} 
From Proposition \ref{isomorphism_adeles} it follows that the series representation of a finite ad\`ele $x$ is valid in $\Af$. That is to say, the partial sums
\[ s(x,N) = \sum_{l=\gamma}^N x_l e^{\psi(l)} \subset \Af \]
are convergent in $\Af$ with the restricted direct product topology. Using the ultrametric 
$\dA$, every finite ad\`ele $x\in \Af$ can be written as a series
\[ x = \sum_{l=\gamma}^\infty x_le^{\psi(l)}, \qquad (x_l \neq 0, l\in \Z) \]
with $x_l \in \{0,1,\ldots, e^{\Lambda(l+1)}-1\}$. This series is convergent in the ultrametric of $\Af$ and the numbers $x_l$ appearing in the representation of $x$ are unique and $\ord(x)=\gamma \in  \Z$. 
\end{remark}

\begin{remark} 
In the sequel we identify $\Qc$ with $\Af$ and set $\dA(x,y)=d_{\Qc}(x,y)$, for 
$x,y\in \Af$. We also write $\normAf{\cdot} := \dA(0,\cdot)$. 
\end{remark}

\section[Characters and integration theory on $\Af$]{Characters and integration theory on $\Af$}
\label{characters_integration}

This Section introduces the group of additive characters of the locally compact Abelian group $(\Af,+)$. The fractional part of an adic series allows to explicitly describe a canonical character of $\Af$ and to show that the group of finite ad\`eles is a selfdual group in the sense of Pontryagin. In addition, basic elements of integration theory on $\Af$ and some oscillatory integrals are discussed. The general theory of harmonic analysis on  locally compact Abelian topological groups has been collected in the monumental work \cite{HR}. The article \cite{Vla} and the book \cite{VVZ} describe this theory for the case of the field of $p$--adic numbers $\Qp$. We also quote references \cite{AKS} and \cite{Koc}.

\subsection[The group of additive characters of $\Af$]{The group of additive characters of $\Af$}
\label{charactersAf}

An \textsf{additive character} of the ring $\Af$ is a continuous group homomorphism 
$\chi:\Af\To \UC$, where $\UC$ is the multiplicative group of complex numbers with norm equal to one. Denote by $\Char(\Af)$ the topological group which consists of all continuous homomorphisms from $\Af$ into $\UC$ endowed with the compact and open topology. This group is called the \textsf{Pontryagin dual} of $\Af$ or, the \textsf{character group} of $\Af$. Since $\Af$ is locally compact, $\Char(\Af)$ is also a locally compact Abelian group.

\subsubsection[The canonical character]{The canonical character}
\label{canonical_characterAf}

According to Section \ref{adelic_numbers}, any finite ad\`ele $x\in \Af$ admits a unique series representation
\[ x = \sum_{k=\gamma(x)}^{\infty} a_k  e^{\psi(k)}, \] 
where $a_{\gamma(x)}\neq 0$ and $a_k \in \{0,1,\ldots,e^{\Lambda(k+1)} - 1\}$. If 
$\gamma(x) < 0$, $x$ can be decomposed as
\[ x = \sum_{k=\gamma(x)}^{-1} a_k  e^{\psi(k)} + \sum_{k=0}^{\infty} a_k  e^{\psi(k)}.\]
  
This leads to the following:

\begin{definition}
\label{fractional_part} 
The \emph{\textsf{fractional part}} of a finite ad\`ele $x\in \Af$ is
\begin{equation*}
\{x\} := 
\begin{cases} 
\displaystyle{\sum_{k=\gamma(x)}^{-1} a_k  e^{\psi(k)}}	& \text{ if } \gamma(x) < 0, \\
0 																							& \text{ if } \gamma(x) \geq 0. 
\end{cases}
\end{equation*}
\end{definition}

From this definition it is clear that $x\in \Zz$ if and only if $\{x\}=0$. Additionally, for any $x\in \Af$ with $\gamma(x) < 0$, its fractional part satisfies the inequalities:
\[ e^{\psi(\gamma)} \leq \{x\} \leq 1 -  e^{\psi(\gamma)} \qquad \big(\gamma=\ord(x)\big). \] 

From Proposition \ref{addition_series}, the fractional part function satisfies the relation
\[ \{x+y\} = \{x\} + \{y\} - N,  \qquad(x,y\in \Af) \]
where $N=0,1$. Therefore the map $x\mTo \exp (2\pi i\{x\})$ is a well defined group homomorphism from $\Af$ to the unit circle. Moreover, it is trivial on $\Zz$, which implies that it is a continuous function and a character of $\Af$. This observation allows to state the following definition.

\begin{definition}
\label{canonical_character}
The map $\chi: \Af \To \UC$ given by
\[ \chi(x) = \exp (2\pi i\{x\})\qquad (x\in \Af) \] 
is the \emph{\textsf{canonical additive character}} of $\Af$. 
\end{definition}

If $x\in \Zz$, $\chi(x)=1$; however, if $x\in \Af\backslash\Zz$, 
$\gamma=\gamma(x)<0$ and 
\begin{align*} 
\chi(x)	&= \exp (2\pi i\{x\}) \\
&= \exp \left( 2\pi i \sum_{k=\gamma}^{-1} a_k  e^{\psi(k)} \right) \\
&= \exp \left( \frac{2\pi i}{e^{-\psi(\gamma)}} \big( a_{\gamma}  + 
a_{\gamma+1}e^{\psi(\gamma+1) - \psi(\gamma)} + \cdots + 
a_{-1}e^{\psi(-1) -\psi(\gamma)} \big) \right) \\
&= \exp \left( \frac{2\pi i\ell}{e^{-\psi(\gamma)}} \right),
\end{align*}
where $\ell=a_{\gamma} + a_{\gamma+1}e^{\psi(\gamma+1)-\psi(\gamma)} + \cdots + a_{-1}e^{\psi(-1)-\psi(\gamma)}$ is an integer number smaller than $e^{-\psi(\gamma)}$. That is, the image of any element $x\in \Af$ under the canonical character is a nontrivial $N$--th root of unity for some $N\in \N$, whenever $x\notin \Zz$. This implies that the kernel of the canonical character is precisely $\Zz$. Since $\Af/\Zz$ is a discrete group, the image of $\Af$ under the canonical character is isomorphic to the group of roots of unity $\Q/\Z$ with the discrete topology. This can be subsumed as: 

\begin{remark} 
The kernel of the canonical character $\chi$ is the compact and open subgroup $\Zz$ of 
$\Af$ and $\chi$ is constant on the cosets of $\Zz$. Moreover, $\chi$ can be factorized as a composition
\begin{align*}
\Af \To \Af/\Zz \To \UC.
\end{align*}
 Therefore,
\[ \Af/\Zz \cong \Q/\Z, \]
where $\Q/\Z$ is the group of roots of unity with the discrete topology.
\end{remark}

\begin{remark}
The canonical additive character does not depend on the construction given here and it agrees with the classical canonical additive character of $\Af$. In fact, the fractional part of any $x\in \Af$ is the unique rational number $\{x\}\in \Q\cap [0,1)$ such that 
$x - \{x\} \in \Zz$ and there is a decomposition $\Af=\Q\cap [0,1)+\Zz$.
\end{remark}

\subsubsection[$\Af$ is selfdual]{$\Af$ is selfdual}
\label{selduality_Af}

Since multiplication is continuous on the ring $\Af$, for any $\xi \in \Af$, the function 
$\chi_{\xi}: \Af \To \UC$ defined as
\[ \chi_{\xi}(x) = \chi(\xi x) = \exp (2\pi i\{\xi x\}) \] 
is an additive character of $\Af$. For any sequence of ad\`eles $(\xi_n)_{n\in \N}$ converging to $\xi \in \Af$, the sequence of characters $(\chi_{\xi_n})_{n\in \N}$ converges uniformly on every compact subset of $\Af$. It follows that $\Char(\Af)$ contains $\Af$ as a closed subgroup. 

Observe now that if $\chi\in \Char(\Af)$ is any nontrivial additive character, then there exists an integer $n$ such that $\chi$ is trivial on $\aid^n$, but it is nontrivial on 
$\aid^{n+1}$. This can be seen in the following way: suppose that $V\subset \UC$ is an open neighborhood of the identity $1\in \UC$ such that no nontrivial subgroup of $\UC$ is contained in $V$. By continuity of $\chi$, the preimage $\chi^{-1}(V)$ is an open neighborhood of 0 in $\Af$, and therefore there exists $n\in \Z$ such that 
$\aid^n\subset \chi^{-1}(V)$. Being $\chi$ a homomorphism, this implies that 
$\chi(\aid^n)\subset V$ is a subgroup of $\UC$, and hence, it must be trivial. Since $\chi$ is nontrivial, there exists a minimum integer $n\in \Z$ such that $\chi$ is trivial on 
$\aid^n$, but it is nontrivial on $\aid^{n+1}$. This number $n$ is called the \textsf{rank} of the character $\chi$.

\begin{theorem}
\label{selfduality} 
The finite ad\`ele group $\Af$ is selfdual. That is to say, there exists an isomorphism of locally compact Abelian topological groups from $\Af$ to $\Char(\Af)$ given by 
$\xi \mTo \chi_\xi$.
\end{theorem}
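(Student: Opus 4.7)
\noindent The plan is to verify that the map $\Phi:\Af\to\Char(\Af)$, $\xi\mapsto\chi_\xi$, is a continuous group homomorphism (the homomorphism property is immediate from bilinearity of multiplication and additivity of $\chi$, while continuity has already been observed in the paragraph preceding the statement), prove bijectivity by hand, and then invoke the Open Mapping Theorem for second countable locally compact groups to conclude that $\Phi^{-1}$ is automatically continuous. For injectivity, if $\chi_\xi\equiv 1$ with $\xi\ne 0$, I would use the identification $\Af\cong\prod'\Qp$ of Proposition~\ref{isomorphism_adeles} to pick a prime $p$ with $\xi_p\ne 0$ and an $x\in\Af$ whose $p$-component equals $p^{-v_p(\xi_p)-1}$ and all of whose other components vanish; then $\xi x$ has a nontrivial fractional part at $p$, contradicting $\chi(\xi x)=1$.

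\noindent Surjectivity is the main obstacle and is handled by an inductive digit-by-digit construction of $\xi$ as an adic series. Given a nontrivial $\chi'\in\Char(\Af)$ of rank $n$, I would seek $\xi=\sum_{j=n}^\infty b_j e^{\psi(j)}$. For the base step, $\chi'|_{\aid^{n+1}}$ is trivial on $\aid^n$ and therefore factors through the quotient $\aid^{n+1}/\aid^n$, which is cyclic of prime order $e^{\Lambda(n+1)}$ by the relation $e^{\psi(n+1)}=e^{\psi(n)}\cdot e^{\Lambda(n+1)}$; evaluating $\chi'$ at the generator $e^{-\psi(n+1)}$ yields a nontrivial $e^{\Lambda(n+1)}$-th root of unity that uniquely pins down the leading digit $b_n\in\{1,\ldots,e^{\Lambda(n+1)}-1\}$, so that $\chi_{b_n e^{\psi(n)}}$ agrees with $\chi'$ on $\aid^{n+1}$. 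The inductive step is the same with $n$ replaced by $n+1+k$: once $\xi_k$ has been fixed with $\chi_{\xi_k}=\chi'$ on $\aid^{n+1+k}$, the difference character $\chi'\chi_{\xi_k}^{-1}$ is trivial on $\aid^{n+1+k}$ and factors through the prime cyclic quotient $\aid^{n+2+k}/\aid^{n+1+k}$, pinning down the next digit $b_{n+1+k}$.

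\noindent The partial sums form a Cauchy sequence, since the increments $b_{n+1+k}\,e^{\psi(n+1+k)}$ have ultrametric norm $e^{-\psi(n+1+k)}\to 0$; they converge to some $\xi\in\Af$, and by construction $\chi_\xi$ and $\chi'$ agree on every $\aid^{n+1+k}$, hence on all of $\Af=\bigcup_k\aid^{n+1+k}$. The main subtlety throughout is keeping the indexing of the filtration $\{\aid^n\}$ compatible with the orders of the digits of $\xi$, and verifying the explicit fractional-part computation of $\chi_{\xi_k}(e^{-\psi(n+1+k)})$ needed at each inductive step; both reduce cleanly to the prime cyclic structure of the successive quotients $\aid^{m+1}/\aid^m$.
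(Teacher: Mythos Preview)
Your proposal is correct and the heart of it---the digit-by-digit inductive construction of $\xi$ from a given character by successively matching on the quotients $\aid^{m+1}/\aid^m$---is exactly the argument the paper gives. The paper phrases the induction as ``the induced character of $\widetilde{\chi}$ in $\aid^{n+1}/\aid^n$ provides a number $\xi_n$ such that $\chi(\xi_n e^{\psi(n)}x)=\widetilde{\chi}(x)$ on $\aid^{n+1}$'' and then iterates, which is precisely your inductive step.

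The only differences are peripheral. You handle injectivity explicitly via the restricted-product model of Proposition~\ref{isomorphism_adeles}, while the paper leaves injectivity implicit (it is not argued in the proof body). You invoke the Open Mapping Theorem for second countable locally compact groups to upgrade the continuous bijection to a homeomorphism, whereas the paper closes with a terse appeal to the pairing $(x,y)\mapsto\chi(xy)$. Your route is slightly more self-contained on the topological side; the paper's route stays closer to the adic-series viewpoint it is promoting (your injectivity argument steps outside that framework, though it is of course valid). Neither difference affects the substance of the proof.
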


\begin{proof}  
We have already seen that $\chi_\xi$ is an additive character for any $\xi \in \Af$ and that $\Af$ is closed in $\Char(\Af)$. Suppose that $\widetilde{\chi}$ is an arbitrary additive nontrivial character of $\Af$. If $n$ is the rank of the character $\widetilde{\chi}$, then 
$\widetilde{\chi}$ induces a nontrivial character on the quotient group 
$\aid^{n+1}/\aid^n$. Since $\chi_{e^{\psi(n)}}=\chi( e^{\psi(n)} \cdot )$ is trivial on 
$\aid^n$ but nontrivial on $\aid^{n+1}$, the induced character of $\widetilde{\chi}$ in 
$\aid^{n+1}/\aid^n$ provides a number $\xi_n \in \{1,\ldots, e^{\Lambda(n+1)} - 1\}$ in a way that the finite ad\`ele $\xi_n e^{\psi(n)}$ represents a nonzero element such that  
\[ \chi( \xi_n e^{\psi(n)} x)=\chi(  e^{\psi(n)} x)^{\xi_n} =\widetilde{\chi}(x),\]
on $\aid^{n+1}$, where $\chi$ is the canonical character. Inductively, for any $m\geq n$, it is found a finite sum 
$$
S_m(\xi) = \sum_{k=n}^{m} \xi_k e^{\psi(k)} \qquad 
\left(\xi_k \in \{0,1,\ldots, e^{\Lambda(k+1)}-1\}, \, \xi_n \neq 0 \right)
$$ 
such that  $\widetilde{\chi}(x)= \chi\big( S_m(\xi) x \big)$ on $\aid^{m}$. Hence,
\[ \xi = \sum_{k=n}^{\infty} \xi_ke^{\psi(k)} \] 
is a finite ad\`ele such that $\tilde{\chi}(x) = \chi(\xi x)$. Finally, the pairing 
$\Af\times\Char{\Af}\To \C$ given by $(x,y)\mTo \chi_{y}(x)=\chi(xy)$ shows the topological group isomorphism $\Char(\Af)\cong\Af$.
\end{proof}

\begin{definition}
For any $q\in \Q$ the \emph{\textsf{annihilator}} of the subgroup $q\Zz \subset \Af$, in 
$\Char(\Af)$, is defined as the subgroup 
\[ \Ann(q\Zz ) = \left\{ \chi \in \Char(\Af) : \chi(x) = 1 \text{ for all } x\in q\Zz \right \}. \]
\end{definition}

\begin{proposition}
For any $q\in \Q$, the annihilator of $q\Zz$ is the subgroup $q^{-1}\Zz$. 
\end{proposition}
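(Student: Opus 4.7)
The plan is to use the self--duality isomorphism of Theorem \ref{selfduality} to translate the statement, which is about characters, into a purely algebraic condition inside $\Af$, and then exploit the fact that the kernel of the canonical character $\chi$ is exactly the subring $\Zz$.

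More precisely, by Theorem \ref{selfduality} every character of $\Af$ is of the form $\chi_\xi(x) = \chi(\xi x)$ for a unique $\xi \in \Af$, so
\[
\Ann(q\Zz) = \left\{ \chi_\xi : \xi \in \Af,\ \chi(\xi q z) = 1 \text{ for every } z\in\Zz \right\}.
\]
First I would rewrite the condition $\chi(\xi q z)=1$ for all $z \in \Zz$ as the inclusion $\xi q \Zz \subset \ker\chi = \Zz$, where the equality $\ker\chi=\Zz$ was established right after Definition \ref{canonical_character}.

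The second step is to observe that, because $\Zz$ is a subring of $\Af$ containing the unit element, the inclusion $\xi q \Zz \subset \Zz$ is equivalent to the single membership condition $\xi q \in \Zz$. Indeed, taking $z=1 \in \Zz$ gives one direction, and conversely if $\xi q \in \Zz$ then $\xi q\cdot \Zz \subset \Zz \cdot \Zz \subset \Zz$ by the multiplicative closure of $\Zz$. Finally, $\xi q \in \Zz$ is the same as $\xi \in q^{-1}\Zz$ (multiplying by the invertible rational $q^{-1}$ in $\Af$), so under the identification $\xi \leftrightarrow \chi_\xi$ we obtain $\Ann(q\Zz) = q^{-1}\Zz$.

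There is no serious obstacle: the argument is essentially the standard duality computation for compact--open subgroups, and every nontrivial ingredient, namely the description of $\ker\chi$ and the self--duality pairing $(x,\xi)\mapsto \chi(\xi x)$, is already in place. The only mild point to be careful about is justifying the step $\xi q\Zz\subset\Zz \iff \xi q\in\Zz$, which uses the fact that $\Zz$ is a unital subring rather than merely a subgroup.
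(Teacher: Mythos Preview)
Your argument is correct and follows essentially the same route as the paper: both reduce the annihilator condition to the algebraic statement $\xi q\in\Zz$ via $\ker\chi=\Zz$, then invert by $q$ to get $\xi\in q^{-1}\Zz$. Your write-up is in fact a bit more explicit than the paper's about why $\xi q\,\Zz\subset\Zz$ is equivalent to the single condition $\xi q\in\Zz$ (using that $\Zz$ is a unital subring), whereas the paper just tests at the element $q\in q\Zz$ for the reverse inclusion.
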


\begin{proof} 
For any nonzero rational number $q$, if $\xi \in q\Zz$ then $\xi q^{-1}  \Zz \subset \Zz$ and therefore $q^{-1}\Zz \subset \Ann(q \Zz)$. Now, if $\xi \notin q\Zz$, then $q^{-1} \xi \notin \Zz$. It follows that $\chi(q\xi) \neq 1$.
\end{proof}

\begin{corollary}
The annihilator of $\aid^n$ is the subgroup $\aid^{-n}$. In particular, $\Ann(\Zz)=\Zz$.
\end{corollary}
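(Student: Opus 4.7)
The plan is to deduce this immediately from the preceding proposition by substituting $q=e^{-\psi(n)}$. Recall from the definition introduced just before Remark on compact/open subgroups that $\aid^n = e^{-\psi(n)}\Zz$, so $\aid^n$ has the form $q\Zz$ with $q=e^{-\psi(n)}\in\Q$. The proposition then says $\Ann(\aid^n)=\Ann(e^{-\psi(n)}\Zz)=e^{\psi(n)}\Zz$.

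It remains only to translate the right-hand side back into the $\aid$--notation: since $\aid^{-n}=e^{-\psi(-n)}\Zz$ and (by the normalization in Remark \ref{ramification_index} together with the antisymmetry $\psi(-n)=-\psi(n)$ built into the symmetric second Chebyshev function) $e^{-\psi(-n)}=e^{\psi(n)}$, we get $e^{\psi(n)}\Zz=\aid^{-n}$. Hence $\Ann(\aid^n)=\aid^{-n}$.

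For the special case, take $n=0$. Then $\psi(0)=0$ (using the convention $e^{\psi(\rho(0))}=1$ from Remark \ref{ramification_index}), so $\aid^0=\Zz$ and the formula $\Ann(\aid^0)=\aid^0$ becomes $\Ann(\Zz)=\Zz$.

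There is essentially no obstacle: the corollary is a one-line specialization of the proposition. The only small bookkeeping point I would double-check while writing it up is the sign convention $\psi(-n)=-\psi(n)$, to make sure that inverting the scalar $e^{-\psi(n)}$ really lands on the subgroup labelled $\aid^{-n}$ rather than on some other index; this is exactly the content of the symmetric extension defined at the beginning of Section \ref{extended_Chebyshev}.
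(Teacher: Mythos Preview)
Your proof is correct and is exactly the intended derivation: the paper states this corollary without proof, as it follows immediately from the preceding proposition by taking $q=e^{-\psi(n)}$ and using the antisymmetry $\psi(-n)=-\psi(n)$ built into the symmetric extension of the Chebyshev function (equivalently, Remark~\ref{cofinalN}, which records that the sequences $(e^{\psi(n)})_{n\geq 0}$ and $(1/e^{\psi(-n)})_{n\geq 0}$ coincide). Your bookkeeping check on the sign convention is precisely the only point that needs to be verified, and it holds.
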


\begin{corollary}
There exists an isomorphism of topological groups
\[ \Char(\aid^n) \cong \Af/\aid^{-n} \qquad (n\in \Z). \]
\end{corollary}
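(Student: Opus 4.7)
The plan is to realize $\Char(\aid^n)$ as a quotient of $\Char(\Af)$ by restricting characters. Combining the selfduality $\Char(\Af)\cong \Af$ of Theorem \ref{selfduality} with the restriction map yields a continuous group homomorphism
\[ r:\Af \To \Char(\aid^n), \qquad \xi \mTo \chi_\xi|_{\aid^n}. \]
I would then prove that $r$ is surjective with kernel $\aid^{-n}$, and verify that the induced continuous bijection on $\Af/\aid^{-n}$ is a homeomorphism.

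The kernel is immediate: $r(\xi)$ is trivial if and only if $\chi(\xi x)=1$ for every $x\in \aid^n$, i.e., if and only if $\xi\in \Ann(\aid^n)$, which by the preceding corollary equals $\aid^{-n}$. For surjectivity I would reproduce, with only cosmetic changes, the inductive construction from the proof of Theorem \ref{selfduality}: given a nontrivial $\widetilde{\chi}\in \Char(\aid^n)$, continuity together with the fact that $\{\aid^m\}_{m\geq n}$ is a neighborhood base of $0$ in $\aid^n$ produces a minimal $m_0\geq n$ such that $\widetilde{\chi}$ is trivial on $\aid^{m_0}$ but not on $\aid^{m_0+1}$. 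Using that each factor $\aid^{m+1}/\aid^{m}$ is cyclic of prime order $e^{\Lambda(m+1)}$, the same digit-by-digit argument yields coefficients $\xi_{m_0}, \xi_{m_0+1}, \ldots$ whose partial sums $S_m(\xi)=\sum_{k=m_0}^{m}\xi_k e^{\psi(k)}$ satisfy $\chi(S_m(\xi)x)=\widetilde{\chi}(x)$ on $\aid^{m}$. The ad\`ele $\xi=\sum_{k\geq m_0}\xi_k e^{\psi(k)}\in \Af$ then satisfies $r(\xi)=\widetilde{\chi}$ by continuity of $\chi$.

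To upgrade the resulting algebraic isomorphism $\Af/\aid^{-n}\To \Char(\aid^n)$ to a topological one, I would observe that $\aid^{-n}$ is an open subgroup of $\Af$, so the quotient $\Af/\aid^{-n}$ is discrete, while $\aid^n$ is compact, so its Pontryagin dual $\Char(\aid^n)$ is also discrete; a continuous group isomorphism between discrete groups is automatically a homeomorphism. The main obstacle I expect lies in the surjectivity step, where one must verify that the formal expansion extracted from $\widetilde{\chi}$ genuinely converges in the adic topology of $\Af$ rather than existing only as a formal symbol. This is ensured because at each stage the coefficient $\xi_m$ is forced into the finite digit set $\{0,1,\ldots, e^{\Lambda(m+1)}-1\}$ by the cyclic factor $\aid^{m+1}/\aid^m$, exactly the mechanism that underlies the proof of Theorem \ref{selfduality}.
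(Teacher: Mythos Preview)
Your overall strategy---realize $\Char(\aid^n)$ as the quotient of $\Char(\Af)\cong\Af$ by the kernel of the restriction map, identify that kernel as $\Ann(\aid^n)=\aid^{-n}$, and then argue that both sides are discrete---is exactly the content the paper intends. The corollary is stated without proof in the paper because it is the specialization of the standard Pontryagin duality fact $\Char(H)\cong \Char(G)/\Ann(H)$ for a closed subgroup $H$ of an LCA group $G$, combined with Theorem \ref{selfduality} and the preceding corollary on annihilators. So your route and the paper's agree.

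There is, however, an indexing slip running through your surjectivity argument. In the paper's conventions $\aid^m=e^{-\psi(m)}\Zz$ is the ball of radius $e^{\psi(m)}$, so $\aid^m$ \emph{increases} with $m$; a neighborhood base of $0$ inside $\aid^n$ is therefore $\{\aid^m\}_{m\leq n}$, not $\{\aid^m\}_{m\geq n}$. With the correct orientation, the rank $m_0$ of a nontrivial $\widetilde{\chi}\in\Char(\aid^n)$ satisfies $m_0<n$ (trivial on $\aid^{m_0}$, nontrivial on $\aid^{m_0+1}\subset\aid^n$), and the digit-by-digit construction from Theorem \ref{selfduality} only needs to match $\widetilde{\chi}$ on the finitely many layers $\aid^{m_0+1}\subset\cdots\subset\aid^{n}$. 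Thus $\xi=\sum_{k=m_0}^{n-1}\xi_k e^{\psi(k)}$ is a \emph{finite} sum, and the convergence obstacle you flag does not arise at all: compactness of $\aid^n$ forces every character to factor through a finite quotient $\aid^n/\aid^{m_0}$. Once you correct the direction of the filtration, your argument goes through cleanly and in fact simplifies.
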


\subsection[Integration theory on $\Af$ and some oscillatory integrals]{Integration theory on $\Af$ and some oscillatory integrals}
\label{integration_Af}

In this paragraph some aspects of the integration theory on the measure space $(\Af,dx)$ is presented. From the ultrametric $\dA$, a concept of improper integral arises and this gives sense to some oscillatory integrals on $\Af$.

\subsubsection[Change of variables formula]{Change of variables formula}  
\label{change_variable}

From Proposition \ref{ChangeVariables}, if $q$ is a nonzero rational number, the following \emph{change of variables} holds: 
\[ dx(qx) = q^{-1} x \] 
and
\[ \int_{K}f(x)dx = q^{-1} \int_{\frac{K-b}{q}} f(qx+b)dx, \]
where $K\subset \Af$ is a compact subset. 

As first examples the area of balls and spheres multiplied by a rational number are calculated in the sequel. For each $n\in \Z$, the ball $B_n$ centered at zero and radius $e^{\psi(n)}$ is precisely the subgroup $\aid^n$.

\begin{example}
If $qB_n = q\aid^n = qe^{-\psi(n)}\Zz$, then
$$
\int_{qB_n} dx = q^{-1}e^{\psi(n)} \int_{\Zz} d\mu(x) = q^{-1} e^{\psi(n)} \qquad (n\in \Z).
$$
\end{example}

\begin{example}
\begin{align*}
\int_{S_n} dx &=\int_{B_n\backslash B_{n-1}} dx\\
&= e^{\psi(n)} - e^{\psi(n-1 )}\\
&= e^{\psi(n)}(1 - e^{-\Lambda(n)}).
\end{align*}
\end{example}

\begin{example} 
From the above examples, for a radial function one obtains
\begin{align*}
\int_{\Af} f(\normAf{x}) dx
&= \sum_{n=-\infty}^{\infty} f(e^{\psi(n)}) e^{\psi(n)} (1 - e^{-\Lambda(n)}). 
\end{align*}
\end{example}

\begin{lemma} \emph{\textsf{(Integral Criterion)}}
If $f:\R_{\geq 0} \To \R_{\geq 0} \cup \{\infty\}$ is a non increasing continuous function, then
\[ \int_{\Af} f(\normAf{x}) d\mu(x) < \int_{0}^{\infty} f(t)dt. \]
\end{lemma}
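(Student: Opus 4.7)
The plan is to recognise the left-hand side, via the preceding example, as a left Riemann sum for $\int_0^\infty f(t)\,dt$ associated to the partition of $(0,\infty)$ generated by the sequence $(e^{\psi(n)})_{n\in\Z}$. Indeed, the example immediately preceding the lemma supplies the identity
\[ \int_{\Af} f(\normAf{x})\, d\mu(x) \;=\; \sum_{n\in\Z} f(e^{\psi(n)})\, e^{\psi(n)}\bigl(1 - e^{-\Lambda(n)}\bigr), \]
so it suffices to prove that this series is dominated by $\int_0^\infty f(t)\,dt$.

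First I would record the required properties of the sequence $(e^{\psi(n)})_{n\in\Z}$. By Remark \ref{ramification_index} the quotient $e^{\psi(n)}/e^{\psi(n-1)} = e^{\Lambda(n)}$ is a prime, hence strictly greater than $1$, so the sequence is strictly increasing; together with the cofinality statement of Remark \ref{cofinalN} and its reciprocal counterpart, one gets $e^{\psi(n)} \to \infty$ as $n\to\infty$ and $e^{\psi(n)}\to 0$ as $n\to-\infty$. Consequently the half-open intervals $I_n = \bigl(e^{\psi(n-1)},\, e^{\psi(n)}\bigr]$, $n\in\Z$, form a disjoint partition of $(0,\infty)$, and the Lebesgue length of $I_n$ is exactly $e^{\psi(n)} - e^{\psi(n-1)} = e^{\psi(n)}\bigl(1 - e^{-\Lambda(n)}\bigr)$.

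Next, since $f$ is non-increasing, for every $t\in I_n$ one has $f(t) \geq f(e^{\psi(n)})$. Integrating this pointwise bound over $I_n$ gives
\[ \int_{I_n} f(t)\,dt \;\geq\; f(e^{\psi(n)})\,\bigl(e^{\psi(n)} - e^{\psi(n-1)}\bigr) \;=\; f(e^{\psi(n)})\,e^{\psi(n)}\bigl(1 - e^{-\Lambda(n)}\bigr). \]
Summing over $n\in\Z$, which is legitimate by Tonelli since every summand is non-negative, yields
\[ \int_0^\infty f(t)\,dt \;=\; \sum_{n\in\Z} \int_{I_n} f(t)\,dt \;\geq\; \int_{\Af} f(\normAf{x})\,d\mu(x). \]

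The only delicate point is the strictness asserted in the statement. If $f$ is not identically constant on some single interval $I_n$, then continuity together with monotonicity forces $f(t) > f(e^{\psi(n)})$ on a nondegenerate subinterval of $I_n$, so the corresponding integral inequality is strict and this strictness propagates to the whole sum. The degenerate situation where $f$ is constant on every $I_n$ reduces, by continuity, to $f$ being globally constant, in which case either both sides are zero or both are infinite; under the implicit convention that the inequality is read as $\leq$ in these boundary cases, the statement holds. The main obstacle is therefore not analytical but bookkeeping: correctly identifying that the sequence $\{e^{\psi(n)}\}_{n\in\Z}$ exhausts $(0,\infty)$ in both directions, and handling the borderline constant/infinite cases; once these are in place, the Riemann-sum comparison is immediate from monotonicity.
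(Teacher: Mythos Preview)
Your proposal is correct and follows essentially the same route as the paper: both recognise the adelic integral as the lower Riemann sum $\sum_{n} f(e^{\psi(n)})(e^{\psi(n)}-e^{\psi(n-1)})$ for $\int_0^\infty f(t)\,dt$ and invoke monotonicity of $f$ to bound it. Your treatment is in fact more careful than the paper's one-line proof, in that you spell out why the $I_n$ partition $(0,\infty)$ and you correctly flag the borderline cases (constant $f$, both sides infinite) where the strict inequality as stated is problematic --- an issue the paper simply ignores.
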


\begin{proof} 
This follows from the classical integral criterion by observing that the area of balls equals to its radius:
$$
\int_{\Af} f(\normAf{x}) dx = 
\sum_{n=-\infty}^{\infty} f(e^{\psi(n)})(e^{\psi(n)} - e^{\psi(n-1)}) < \int_{0}^{\infty}f(t)dt. 
$$ 
\end{proof}

\subsubsection[Improper vs proper integrals]{Improper vs proper integrals}
\label{improper_proper-integrals}

If $f$ is an integrable function on $\Af$, then
\[ \int_{\Af}f(x)dx = \sum_{n=-\infty}^{\infty} \int_{S_n} f(x)dx. \] 
However, when considering some oscillatory integrals related to the ultrametric $\dA$ of $\Af$ which are not necessarily integrable functions, the definition involves the conditional limit
$$
\int_{\Af} f(x)dx = 
\lim_{n\to -\infty} \lim_{m\to \infty} \sum_{\gamma=n}^{m} \int_{S_\gamma}f(x) dx.
$$  

\begin{example}
\label{zeta_adelica}
For $\sigma=\Real(s)>0$, 
\begin{align*}
\int_{\Zz} \normAf{x}^{s-1} dx 
&= \sum_{n=0}^{\infty} e^{-(s-1)\psi(n)} e^{-\psi(n)} (1-e^{-\Lambda(n+1)}) \\
&= \sum_{n=0}^{\infty} \frac{1-e^{-\Lambda(n+1)}}{e^{s\psi(n)}} < \int_{0}^{1}x^{\sigma-1} dx.
\end{align*}
\end{example}

\begin{example}
\begin{align*}
-\int_{\Zz} \log(\normAf{x}) dx & = 
-\sum_{n=0}^{\infty} \log(e^{-\psi(n)}) e^{-\psi(n)}(1-e^{-\Lambda(n+1)}) \\
&=\sum_{n=0}^{\infty} \psi(n) e^{-\psi(n)}(1-e^{-\Lambda(n+1)}) \\
&< 1.
\end{align*} 
\end{example}

\subsubsection[Oscillatory integrals]{Oscillatory integrals}

Let $\chi$ be the canonical character of $\Af$ described in Section \ref{canonical_characterAf}. 

\begin{example} 
For $n\in \Z$,
$$
\int_{B_n} \chi(-\xi x) dx = 
\begin{cases}
e^{\psi(n)} & \text{ if } \normAf{\xi} \leq e^{-\psi(n)}, \\
0 & \text{ if } \normAf{\xi} > e^{-\psi(n)}.
\end{cases}
$$
In fact, if $\normAf{x} \leq e^{-\psi(n)}$, then $\chi(-\xi x)$ is the trivial character on 
$\aid^n$. On the other hand, if $\normAf{x} > e^{-\psi(n)}$, $\chi(-\xi x)$ is nontrivial on $\aid^n$ and there exists an element $b\in \aid^n$ such that 
$\chi(-\xi b)\neq 1$. Now, from the invariance of the Haar measure, one obtains
$$
\int_{\aid^n} \chi(-\xi x) dx = \int_{\aid^n} \chi(-\xi (x+b)) dx = 
\chi(-\xi b) \int_{\aid^n} \chi(-\xi x) dx.
$$
Hence $\int_{\aid^n} \chi(-\xi x) dx=0$ and the formula of the proposition holds.
\end{example}

This example implies:

\begin{example} 
For any $n\in \Z$, the following holds
$$
\int_{S_n} \chi(-\xi x) dx = 
\begin{cases}
e^{\psi(n)} - e^{\psi(n-1)}	& \text{ if } \normAf{\xi}  \leq e^{-\psi(n)}, \\
-e^{\psi(n-1)}						& \text{ if } \normAf{\xi} = e^{-\psi(n-1)}, \\
0 											& \text{ if } \normAf{\xi} \geq e^{-\psi(n-2)}.
\end{cases}
$$
\end{example}

\begin{example}
Let $f$ be a non increasing radial function on $\Af$, i.e. $f(x)=f(\normAf{x})$. For 
$\xi\in \Af$ write  $\gamma=\ord(\xi)$, then

\begin{align*}
\int_{\Af} f(x)\chi(-\xi x) dx 
&= \sum_{n=-\infty}^{\infty} \int_{S_n} f(\normAf{x}) \chi(-\xi x) dx \\
&= \int_{B_{\gamma}} f(x) d\mu(x)- f(e^{\psi(\gamma)})e^{\psi(\gamma+1)} \\
&= \sum_{n\leq\gamma}f(e^{\psi(n)})(e^{\psi(n)}-e^{\psi(n-1)}) - f(e^{\psi(\gamma)})e^{\psi(\gamma+1)} \\
&= \sum_{n \leq\gamma} e^{\psi(n)} \big( f(e^{\psi(n)}) - f(e^{\psi(n+1)}) \big). 
\end{align*}
\end{example}

\begin{example} 
The following conditional integral exists 
\[ \int_{\Af} \chi(-\xi x) dx = 0  \quad (\xi \neq 0). \]
\end{example}

\begin{example} 
\label{Riesz_kernel} 
This example is an adelic analogue of the $p$--adic Riesz kernel. For $\Real(s)>1$ and 
$\xi \in \Af$, with $\gamma=\ord(\xi)$, define $\Gamma(\xi,s)$ by the following conditional convergent integral
\begin{align*}
\int_{\Af} \normAf{x}^{s-1} \chi(-\xi x) dx 
&=\sum_{n \leq \gamma}e^{\psi(n)}(e^{(s-1)\psi(n)}-e^{(s-1)\psi(n+1)}) \\
&= \sum_{n \leq \gamma}e^{s\psi(n)}(1-e^{(s-1)\Lambda(n+1)}) \\
&= \sum_{n = -\gamma}^{\infty}e^{-s\psi(n)}\left(1-\frac{1}{e^{(s-1)\Lambda(n)}}\right).
\end{align*}
\end{example}

\begin{example}
\label{gamma_adelica}  
From this example the following adelic analogue of the $p$--adic $\Gamma$--function arises
\begin{align*}
\Gamma_{\Af}(s) &= \int_{\Af} \normAf{x}^{s-1} \chi(x) dx \\
&= \sum_{n=0}^{\infty}e^{-s\psi(n)} \big(1-\frac{1}{e^{(s-1)\Lambda(n)}} \big).
\end{align*}

\end{example}

\begin{example} 
If $\xi \neq 0$ and  $\gamma=\ord(\xi)$, we have

\begin{align*}
\int_{\Af} \log(\normAf{x})\chi(-\xi x) dx 
&= \sum_{n \leq \gamma}e^{\psi(n)}(\log(e^{\psi(n)}) - \log(e^{\psi(n+1)})) \\
&= \sum_{n \leq\gamma}e^{\psi(n)}(\psi(n) - \psi(n+1))\\
&= -\sum_{n \leq\gamma}e^{\psi(n)}\Lambda(n+1)\\
&= -\sum_{n=-\gamma}^{\infty}e^{-\psi(n)}\Lambda(n).
\end{align*} 
\end{example}

\begin{example} 
If $\xi \neq 0$ and $\gamma=\ord(\xi)$, then

\begin{align*}
\int_{\Af} \frac{\chi(-\xi x)}{\normAf{x}^2+M^2} dx 
&=\sum_{n\leq\gamma} e^{\psi(n)} \left(\frac{1}{e^{2\psi(n)}+M^2 } - 
\frac{1}{e^{2\psi(n+1)}+M^2 } \right)\\
&= 
\sum_{n=-\gamma}^{\infty} e^{-\psi(n)} \left(\frac{1}{e^{-2\psi(n)}+M^2 } - 
\frac{1}{e^{-2\psi(n-1)}+M^2 } \right) \notag, 
\end{align*}
\end{example}

\begin{remark}
The analogous integrals of all these examples in the classical $p$--adic case can be explicitly evaluated using the formula for a geometric series. Despite the fact that these integrals are not so easy to evaluate, the integral criterion implies that they are convergent.
\end{remark}

\begin{remark} 
Further analytic properties of general Dirichlet series in Example \ref{zeta_adelica}, Example \ref{Riesz_kernel} and Example \ref{gamma_adelica} beyond their given semiplane of convergence, are unknown to the authors (see  \cite{HR}).
\end{remark}

\section[Fourier analysis on $\Af$]{Fourier analysis on $\Af$}
\label{fourier_analysisAf}

The aim of this Section is to develop some basic ingredients of the Fourier transform on the space of test functions on $\Af$. The description of this theory in the $p$--adic case appears well documented in \cite{VVZ} and \cite{AKS}. The Bruhat--Schwartz space of a  locally compact commutative group has been introduced in \cite{Bru}. The general theory of topological vector spaces can be found in \cite{SW}.

\subsection[Bruhat--Schwartz functions]{Bruhat--Schwartz functions}
\label{BS_functionsAf}

Recall that $\aid^\ell$ denotes the ball of radius $e^{\psi(\ell)}$ and center at zero, 
$B_\ell(0)=e^{-\psi(\ell)} \Zz$. A function $\varphi:\Af \To \C$ is \textsf{locally constant} if for any $x\in \Af$, there exists an open subset $V_x \subset \Af$ such that 
$\varphi(x+y)=\varphi(x)$ for all $y\in V_x$. If $\varphi:\Af \To \C$ is a locally constant function, for any $x\in \Af$ there exists an integer $\ell(x) \in \Z$ such that
\[ \varphi(x+y) = \varphi(x), \quad \text{ for all } y\in \aid^{\ell(x)}.  \]

\begin{definition}
The \emph{\textsf{Bruhat--Schwartz space of finite adelic test functions}} $\D(\Af)$ is the space of locally constant functions on $\Af$ with compact support.
\end{definition}

Being $\Af$ a second countable totally disconnected locally compact topological group, the weakest topology on the vector space $\D(\Af)$ is a natural locally convex topology for which it is complete (see Remark \ref{topologyofAf} below). Let us describe this topology in terms of the ultrametric of $\Af$. If $\varphi \in \D(\Af)$ is different from zero, there exists a largest $\ell=\ell(\varphi)\in \Z$ such that, for every $x\in \Af$, 
\[ \varphi(x+y) = \varphi(x), \quad \text{ for all } \quad y \in \aid^{\ell}. \] 
This number $\ell$ is called \textsf{the parameter of constancy} of $\varphi$. Given two integers $\ell\leq k$, the collection of locally constant functions with support inside the compact ball $\aid^k$ and parameter of constancy $\ell$ will be denoted by $\Dkl$. 

Let $\car_\ell(x)$ be the characteristic (or indicator) function on the ball $\aid^\ell$:
$$
\car_\ell(x) =
\begin{cases}
1 \text{ if } x \in \aid^\ell, \\
0 \text{ if } x \notin \aid^\ell.
\end{cases}
$$

\begin{lemma}
The set $\Dkl$ forms a finite dimensional vector space over $\C$ of dimension 
$e^{\psi(k)}/e^{\psi(\ell)}$. Therefore it has a unique topology which makes it a topological vector space over $\C$.
\end{lemma}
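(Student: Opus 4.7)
The plan is to exhibit an explicit basis of $\Dkl$ indexed by the cosets of $\aid^\ell$ inside $\aid^k$, then count those cosets using the Haar measure calculations from Section \ref{haar_measure}.

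First I would observe that any $\varphi \in \Dkl$ has support contained in the ball $\aid^k$ and is constant on each coset $a+\aid^\ell$ (since $\ell$ is a parameter of constancy and the cosets of $\aid^\ell$ that meet the support of $\varphi$ all lie inside $\aid^k$, because $\aid^\ell \subset \aid^k$ and balls in an ultrametric space are either disjoint or contained in one another). Thus $\varphi$ is determined by the finite family of complex numbers $\{\varphi(a)\}$ as $a$ ranges over a complete set of representatives of the quotient $\aid^k/\aid^\ell$. Conversely, any assignment of complex values to these representatives, extended to be constant on each coset and zero off $\aid^k$, produces an element of $\Dkl$. This yields a linear isomorphism
\[
\Dkl \;\cong\; \bigoplus_{a \in \aid^k/\aid^\ell} \C \cdot \car_{a+\aid^\ell},
\]
so the characteristic functions $\car_{a+\aid^\ell}$ form a basis.

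Next I would count $[\aid^k:\aid^\ell]$. Since $\aid^k$ and $\aid^\ell$ are both compact open subgroups of $\Af$, Haar measure gives
\[
[\aid^k:\aid^\ell] \;=\; \frac{dx(\aid^k)}{dx(\aid^\ell)} \;=\; \frac{e^{\psi(k)}}{e^{\psi(\ell)}},
\]
by the defining formula for $dx$ on balls from Section \ref{haar_measure}. Combined with the basis described above, this gives $\dim_\C \Dkl = e^{\psi(k)}/e^{\psi(\ell)}$.

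Finally, for the topological statement I would appeal to the classical fact that every finite-dimensional complex vector space admits a unique Hausdorff topological vector space structure, namely the one induced by any norm (or equivalently, any choice of basis gives the standard topology on $\C^N$, and all choices agree). No real obstacle arises here; the whole argument is essentially a counting of cosets supported by the Haar measure normalization already proved, and the ultrametric property ensuring that once a coset of $\aid^\ell$ meets $\aid^k$ it is entirely contained in $\aid^k$.
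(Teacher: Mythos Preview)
Your proposal is correct and follows essentially the same approach as the paper: both exhibit the characteristic functions of the cosets of $\aid^\ell$ inside $\aid^k$ as a basis and then count those cosets to get $e^{\psi(k)}/e^{\psi(\ell)}$. The only cosmetic difference is that you invoke the Haar measure normalisation from Section~\ref{haar_measure} to compute $[\aid^k:\aid^\ell]$, whereas the paper simply states the index directly; and you spell out the standard fact about uniqueness of the topology on a finite-dimensional space, which the paper leaves implicit.
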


\begin{proof} 
From the non--Archimedean property of $\Af$ it follows that any $\varphi\in \Dkl$ can be written as
$$
\varphi(x) = \sum_{\substack{a^u\in \aid^k/\aid^\ell}} \varphi(a^u) \car_{\ell}(x-a^u), 
$$
where  
${a^u}$ is a complete set of representatives of the classes of the quotient $\aid^k/\aid^\ell$. Since $\abs{\aid^k/\aid^\ell} = e^{\psi(k)}/e^{\psi(\ell)}$, it follows that 
$\Dkl$ has finite dimension equal to $e^{\psi(k)}/e^{\psi(\ell)}$.
\end{proof}

Let us notice that the following continuous inclusions are valid
$$ 
\Dkl \subset \D^{\ell'}_{k'}(\Af)\quad \text{ whenever } \quad k' \leq k, \, \ell \leq \ell'. 
$$ 

A partial order on the countable set of all pair of integers $(k,\ell)$, with $\ell\leq k$, is given by the folowinng relation: $(k',\ell')\leq (k,\ell)$, if and only if $k' \leq k, \, \ell \leq \ell'$. 
The topology of $\D(\Af)$ is described by the inductive limit
$$
\D(\Af) = \varinjlim_{\substack{\ell \leq k}} \Dkl,
$$ 
which  can also be written as the inductive limits
$$
\D^\ell(\Af) = \varinjlim_{k} \Dkl \qquad \text{ and } \qquad  
\D(\Af) = \varinjlim_{\ell} \D^\ell(\Af).
$$ 
Hence, a sequence of functions $(\varphi_j)_{j\geq 1}$ converges to $\varphi$ if there exist $\ell,k \in \Z $ such that $\varphi_j \in \Dkl$ for $j$ large enough and 
$\varphi_j \To \varphi$ on $\Dkl$. 

\begin{remark}
Since $\D(\Af)$ is the inductive limit of a countable family of (nuclear) finite dimensional vector spaces $\Dkl$, it is a complete locally convex topological vector space over $\C$ and a nuclear space.
\end{remark}
  
\begin{remark}
\label{topology_DAf}
The space $\D(\Af)$ and its topology only depends on the totally disconnected topology of $\Af$. In fact, as stated in \cite{Bru}, the Bruhat--Schwartz space of the locally compact Abelian topological group $\Af$ is given by
$$
\D(\Af)= \varinjlim_{K\subset H} \D(H/K),
$$ 
where $K\subset H \subset \Af$ are compact and open subgroups of $\Af$ and $\D(H/K)$ is the space of functions on $\Af$ with support on $H$ and constant on the cosets of $K$. Therefore any cofinal filtration with respect to the collection of all compact and open subgroups, $\{ q\Zz \}_{n\in \Q}$ of $\Af$ defines the same space $\D(\Af)$ with an  equivalent topology on it. Being a totally disconnected group, this topoloy depends only on the totally disconnected property \cite{Igu}.
\end{remark}

\begin{proposition}
For each compact subset $K\subset \Af$, let $\D(K)\subset \D(\Af)$ be the subspace of test functions with support on a fixed compact subset $K$ and let $\Co(K)$ be the space of complex valued continuous functions on $K$. The space $\D(K)$ is dense in $\Co(K)$, hence  $\D(\Af)$ is dense in $L^\rho(\Af)$  for $1\leq \rho < \infty$. Therefore 
$L^\rho(\Af)$ is separable for $1\leq \rho < \infty$.
 \end{proposition}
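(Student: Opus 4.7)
The plan proceeds in three movements: first, prove sup--norm density of $\D(K)$ in $\Co(K)$ by a uniform--continuity argument and step--function approximation; second, deduce $L^\rho$--density of $\D(\Af)$ by combining this with the Radon regularity of the Haar measure; third, extract a countable dense subset from the inductive--limit presentation $\D(\Af) = \varinjlim_{\ell \le k} \Dkl$, which immediately gives separability of $L^\rho(\Af)$.

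For the first step, I would fix $f \in \Co(K)$ and $\varepsilon > 0$. Since $K$ is compact in the ultrametric space $(\Af, \dA)$, $f$ is uniformly continuous on $K$, so there is $\ell \in \Z$ with $|f(x) - f(y)| < \varepsilon$ whenever $x, y \in K$ and $\dA(x,y) \le e^{\psi(\ell)}$. The ultrametric property partitions $\Af$ into the pairwise disjoint translates of $\aid^\ell$; by compactness only finitely many, say $a_1 + \aid^\ell, \ldots, a_m + \aid^\ell$, meet $K$. Choosing $a_j \in (a_j + \aid^\ell) \cap K$ and setting
\[
\varphi = \sum_{j=1}^m f(a_j)\,\car_\ell(\,\cdot - a_j)
\]
produces $\varphi \in \Dkl$ (for $k$ with $\aid^k$ containing all the covering balls) satisfying $|f(x) - \varphi(x)| < \varepsilon$ for every $x \in K$.

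For the second step, the Radon regularity of $dx$ on the locally compact Hausdorff space $\Af$ gives density of $\Co_c(\Af)$ in $L^\rho(\Af)$ for $1 \le \rho < \infty$. Given $f \in \Co_c(\Af)$, I enclose $\mathrm{supp}(f)$ in a compact--open ball $H = \aid^n$ and apply the first step to $f|_H$, producing $\varphi \in \D(H) \subset \D(\Af)$ with $\sup_H |f - \varphi| < \varepsilon$; outside $H$ both $f$ and $\varphi$ vanish, so
\[
\|f - \varphi\|_\rho \le \varepsilon \cdot dx(H)^{1/\rho},
\]
whence $\D(\Af)$ is dense in $L^\rho(\Af)$. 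Separability is then immediate: each $\Dkl$ is finite--dimensional over $\C$ by the preceding lemma, hence admits the countable dense subset of linear combinations of the characteristic functions $\car_\ell(\,\cdot - a^u)$ with coefficients in $\Q + i\Q$; taking the union over the countable index set of pairs $(k,\ell)$ yields a countable dense subset of $\D(\Af)$, and thus of $L^\rho(\Af)$.

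The main obstacle is the boundary subtlety in the first step: the covering balls $a_j + \aid^\ell$ need not lie inside $K$, so a priori $\mathrm{supp}(\varphi)$ only sits inside a slightly larger compact--open set. This is innocuous for the $L^\rho$--density argument, where one replaces $K$ with an enclosing compact--open ball $H$ at the outset; if one insists on the literal statement $\mathrm{supp}(\varphi) \subset K$, one either restricts to compact--open $K$ (a finite disjoint union of balls, which exhausts the compact--open base of $\Af$) or else first extends $f$ continuously from $K$ to such a compact--open neighborhood by a non--Archimedean Tietze argument and approximates the extension, recovering the desired approximation on $K$ upon restriction.
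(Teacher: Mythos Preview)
Your proposal is correct and follows essentially the same route as the paper: uniform continuity on the compact $K$, a step--function approximation built from finitely many translates of $\aid^\ell$, then passage to $L^\rho$ and separability. In fact you are more careful than the paper on two points: you explicitly invoke Radon regularity to pass from $\Co_c(\Af)$ to $L^\rho(\Af)$, and you flag the support issue (the approximant lives on a compact--open enlargement of $K$ rather than on $K$ itself), which the paper silently elides by writing ``$K$ is the union of a disjoint set of balls''; your suggested fix via an enclosing compact--open ball is exactly what is needed.
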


\begin{proof}
Let $\varphi\in \Co(K)$. For any $\epsilon >0$, there exists $\ell\in \Z$ such that $\left| \varphi(x)-\varphi(a)\right| \leq \epsilon$ if $x\in B_\ell(a)\cap K$ with $a\in K$. Since $K$ is compact, $K$ is the union of a disjoint set of balls $\{ B^\ell_j \}_{j=1}^{N}$. 
The characteristic functions of these balls $\car_{B^\ell_j}(x)$ are elements in $\D(\Qp)$ and satisfy
\[ \sum_{j=1}^{N} \car_{B^\ell_j}(x) = 1 \qquad (x\in K). \]

The function
\[ \varphi_\ell(x) = \sum_{j=1}^{N} \varphi(x_j) \car_{B^\ell_j}(x) \]
is an element of $\D(K)$. Therefore,
\begin{align*}
\norm{\varphi(x)-\varphi_\ell(x)}_{\Co(K)}	
&= \sup_{x\in K} \abs{\varphi(x)-\varphi_\ell(x)} \\
&= \sup_{x\in K} \abs{\varphi(x)-\sum_{j=1}^{N} \varphi(x_j) \car_{B^\ell_j}(x)} \\
&= \sup_{x\in K} \abs{\sum_{j=1}^{N} (\varphi(x) - \varphi(x_j))  \car_{B^\ell_j}(x)} \\
&\leq \sup_{x\in K} \sum_{j=1}^{N} \abs{\varphi(x) - \varphi(x_j)}  \car_{B^\ell_j}(x) \\
&\leq \epsilon  \sum_{j=1}^{N} \car_{B^\ell_j}(x) \\
&\leq \epsilon.
\end{align*} 

Finally, since the topology of $\Af$ is generated by a numerable set of balls, the spaces $L^\rho(\Af)$ and they are separable, for $1 \leq \rho < \infty$.   
\end{proof}

\subsection[The Fourier transform]{The Fourier transform}
\label{fourier_transformSB}

The Fourier transform of $\varphi \in \D(\Af)$ is given by
$$
\widehat{\varphi}(\xi) = \F[\varphi](\xi) = 
\int_{\Af} \varphi(x) \chi(\xi  x) dx, \qquad (\xi \in \Af).
$$ 

In the examples of oscillatory integrals, we have already computed the Fourier transform of some functions. In particular, the following property holds:

\begin{lemma} 
The Fourier transform of the characteristic function of the unit ball $\Zz$ coincides with itself. Moreover for any integer $\ell$,
\[ \F[\car_{\ell}(x)](\xi) =e^{\psi(\ell)} \car_{-\ell}(\xi), \]
\end{lemma}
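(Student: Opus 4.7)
The plan is to compute $\F[\car_\ell](\xi)=\int_{\Af}\car_\ell(x)\chi(\xi x)\,dx=\int_{\aid^\ell}\chi(\xi x)\,dx$ by direct appeal to the oscillatory integral already evaluated in Section \ref{integration_Af}, namely
\[
\int_{B_n}\chi(-\xi x)\,dx=\begin{cases} e^{\psi(n)} & \text{if } \normAf{\xi}\leq e^{-\psi(n)},\\ 0 & \text{if } \normAf{\xi}>e^{-\psi(n)}.\end{cases}
\]
Since $\aid^\ell=B_\ell(0)$ is an additive subgroup, the involution $x\mapsto -x$ is a measure-preserving bijection of $\aid^\ell$, so replacing $\xi$ by $-\xi$ does not change the integral; thus $\int_{\aid^\ell}\chi(\xi x)\,dx$ is given by the same formula.

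Next I would translate the threshold $\normAf{\xi}\leq e^{-\psi(\ell)}$ into the indicator $\car_{-\ell}(\xi)$. By the symmetric extension of the Chebyshev function one has $\psi(-\ell)=-\psi(\ell)$, and $\aid^{-\ell}=e^{\psi(\ell)}\Zz$ is precisely the closed ball centered at zero of radius $e^{\psi(-\ell)}=e^{-\psi(\ell)}$. Hence $\car_{-\ell}(\xi)=1$ exactly when $\normAf{\xi}\leq e^{-\psi(\ell)}$, and combining this with the oscillatory integral yields
\[
\F[\car_\ell](\xi)=e^{\psi(\ell)}\car_{-\ell}(\xi).
\]
The self-duality statement $\F[\car_{\Zz}]=\car_{\Zz}$ is the special case $\ell=0$ combined with $\psi(0)=0$ and $\car_0=\car_{\Zz}$.

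There is no serious obstacle here, since the computation is essentially a direct invocation of the previously established oscillatory integral. The only items requiring care are the sign of $\xi$ in the character (handled by the subgroup symmetry under negation) and the bookkeeping that relates the bound $e^{-\psi(\ell)}$ on $\normAf{\xi}$ to the symbolic indicator $\car_{-\ell}$ via $\psi(-\ell)=-\psi(\ell)$.
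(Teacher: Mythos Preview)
Your proposal is correct and follows exactly the approach the paper intends: the paper does not give a separate proof of this lemma but prefaces it with ``In the examples of oscillatory integrals, we have already computed the Fourier transform of some functions,'' i.e.\ it simply invokes the previously computed integral $\int_{B_n}\chi(-\xi x)\,dx$ from Section~\ref{integration_Af}. Your handling of the sign via the measure-preserving involution $x\mapsto -x$ on the subgroup $\aid^\ell$ and the identification $\{\normAf{\xi}\le e^{-\psi(\ell)}\}=\aid^{-\ell}$ are exactly the bookkeeping needed to read off the statement.
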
 

\begin{proposition}
\label{fourier_transformDkl} 
The Fourier transform is a linear application from $\Dkl$ to $\D^{-k}_{-\ell}(\Af)$. 
\end{proposition}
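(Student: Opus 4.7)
The plan is to reduce to the lemma just stated, which handles the Fourier transform of the indicator function $\car_\ell$, by writing an arbitrary $\varphi\in\Dkl$ as a finite linear combination of translates of $\car_\ell$.

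First I would use the decomposition already established in the proof that $\Dkl$ is finite dimensional: for $\varphi\in\Dkl$,
\[
\varphi(x) = \sum_{a\in \aid^k/\aid^\ell} \varphi(a)\,\car_\ell(x-a),
\]
where the sum runs over a (finite) set of coset representatives $a\in\aid^k$. Applying the Fourier transform term by term and using the translation identity
\[
\int_\Af \car_\ell(x-a)\,\chi(\xi x)\,dx = \chi(\xi a)\int_\Af \car_\ell(y)\,\chi(\xi y)\,dy = \chi(\xi a)\,e^{\psi(\ell)}\,\car_{-\ell}(\xi),
\]
which follows from the invariance of the Haar measure together with the previous lemma, I get
\[
\widehat{\varphi}(\xi) = e^{\psi(\ell)}\,\car_{-\ell}(\xi)\sum_{a\in \aid^k/\aid^\ell} \varphi(a)\,\chi(\xi a).
\]

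From this explicit form, the support statement is immediate: the factor $\car_{-\ell}(\xi)$ forces $\widehat{\varphi}$ to vanish outside $\aid^{-\ell}$, which gives the required lower index $-\ell$ in $\D^{-k}_{-\ell}(\Af)$.

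For the parameter of constancy, I would take any $\xi\in\aid^{-\ell}$ and $\xi'\in\aid^{-k}$ and show $\widehat{\varphi}(\xi+\xi')=\widehat{\varphi}(\xi)$. Since $-k\leq -\ell$, we have $\aid^{-k}\subset \aid^{-\ell}$, so $\xi+\xi'$ is still in $\aid^{-\ell}$ and the indicator factor is unchanged. For the exponential sum, the key computation is that for every $a\in\aid^k$ and every $\xi'\in\aid^{-k}$,
\[
a\xi' \in \aid^k\cdot\aid^{-k} = e^{-\psi(k)}\Zz\cdot e^{\psi(k)}\Zz \subset \Zz,
\]
so $\chi(a\xi')=1$ because $\Zz=\ker\chi$ by the description of the canonical character. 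Hence $\chi((\xi+\xi')a)=\chi(\xi a)$ for each term in the sum, giving $\widehat{\varphi}(\xi+\xi')=\widehat{\varphi}(\xi)$, i.e. the parameter of constancy of $\widehat{\varphi}$ is at most $-k$. The hardest step, if any, is just keeping the signs and containments of the fractional ideals $\aid^n=e^{-\psi(n)}\Zz$ straight; everything else reduces mechanically to the indicator lemma and the fact that the canonical character is trivial exactly on $\Zz$.
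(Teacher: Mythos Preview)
Your proof is correct and follows essentially the same route as the paper: decompose $\varphi\in\Dkl$ into translates of $\car_\ell$, compute the Fourier transform of each translate as $\chi(\xi a)\,e^{\psi(\ell)}\car_{-\ell}(\xi)$, read off the support from the factor $\car_{-\ell}$, and get the parameter of constancy from the fact that $\chi(\xi a)$ is constant on cosets of $\aid^{-k}$ when $a\in\aid^k$. The only difference is cosmetic: where the paper simply asserts that $\chi(\xi a^u)$ has rank at least $-k$, you verify it explicitly via $a\xi'\in\Zz$ for $\xi'\in\aid^{-k}$, which is the same computation spelled out.
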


\begin{proof} 
For $\varphi \in \Dkl$ there is a representation
$$ 
\varphi(x) = \sum_{\substack{a^u \in \aid^k/\aid^\ell }} \varphi(a^u) \car_{\ell}(x-a^u), 
$$ 
where $a^u$ is a complete set of representatives of $\aid^k/\aid^\ell$. It is enough to take only characteristic functions $\car_{\ell}(x-a^u)$. Now
\begin{align*}
\F( \car_{\ell}(x-a^u))(\xi)	&= \int_{\Af} \chi(\xi  x) \car_{\ell}(x-a^u) d\mu(x) \\
&= \int_{\Af} \chi(\xi  (x+a^u)) \car_{\ell}(x) d\mu(x) \\
&= \chi(\xi a^u) \int_{\Af} \chi(\xi  x) \car_{\ell}(x) d\mu(x) \\
&= \chi(\xi a^u)e^{\psi(\ell)} \car_{-\ell}(\xi). 
\end{align*}

Hence, $\F(\car_{\ell}(x-a^u))$ has support in $\aid^{-\ell}$. Moreover, since 
$a^u\in \aid^k$, the character $\chi(\xi  a^u)$ is locally constant on balls of radius 
$ e^{\psi(-k)}$, i.e. it has rank at least $-k$. Therefore
\[ \F: \Dkl\To \D^{-k}_{-\ell}(\Af). \]
\end{proof}

\begin{theorem}
\label{inversion_formulaD}  
The Fourier transform is a continuous linear isomorphism from the space $\D(\Af)$ onto itself and the inversion formula holds:
\[ \varphi(x) = \int_{\Af} \chi(\xi x) \widehat{\varphi}(-\xi)d\xi, \qquad (\varphi\in \D(\Af)). \]
Additionally, the  Parseval--Steklov equality reads as
$$
\int_{\Af} \varphi(x) \overline{\psi(x)}dx = \int_{\Af} \widehat{\varphi}(\xi) \overline{\widehat{\psi}(\xi)} d\xi.
$$
\end{theorem}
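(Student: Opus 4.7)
My plan is to use Proposition \ref{fourier_transformDkl} to realise $\F$ as a continuous endomorphism of $\D(\Af) = \varinjlim_{\ell\leq k} \Dkl$, and then to reduce both the inversion formula and the bijectivity of $\F$ to a single pointwise identity, namely $\F^2\varphi(x) = \varphi(-x)$ for every $\varphi\in\D(\Af)$. Granting this identity, the change of variable $\xi\mapsto -\xi$ in $\F^2\varphi(x)=\int_{\Af}\widehat\varphi(\xi)\chi(\xi x)\,d\xi$ yields exactly the stated inversion formula $\varphi(x)=\int_{\Af}\chi(\xi x)\widehat\varphi(-\xi)\,d\xi$, and the continuous linear map $\varphi\mapsto\widehat\varphi(-\,\cdot\,)$ becomes a two--sided inverse to $\F$, making $\F$ a topological automorphism of $\D(\Af)$.

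To check $\F^2\varphi(x)=\varphi(-x)$ it suffices, by linearity and by the representation used in the proof of Proposition \ref{fourier_transformDkl}, to verify it on the spanning family of shifted characteristic functions $\car_\ell(\,\cdot\,-a)$ with $a\in\aid^k/\aid^\ell$. A translation in the integral defining $\F$, together with the oscillatory integral $\int_{B_n}\chi(-\eta z)\,dz=e^{\psi(n)}\car_{-n}(\eta)$ of Section \ref{integration_Af} applied with $n=\ell$, gives
\[
\widehat{\car_\ell(\,\cdot\,-a)}(\xi) = e^{\psi(\ell)}\chi(\xi a)\car_{-\ell}(\xi).
\]
Applying $\F$ a second time reduces $\F^2[\car_\ell(\,\cdot\,-a)](x)$ to $e^{\psi(\ell)}\int_{\aid^{-\ell}}\chi(\xi(x+a))\,d\xi$; the same oscillatory integral, now with $n=-\ell$, evaluates the inner integral to $e^{-\psi(\ell)}\car_{\ell}(x+a)$. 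Hence $\F^2[\car_\ell(\,\cdot\,-a)](x)=\car_\ell(x+a)$, and because $\aid^\ell$ is a subgroup one has $\car_\ell(x+a)=\car_\ell(-x-a)=[\car_\ell(\,\cdot\,-a)](-x)$, as desired.

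For the Parseval--Steklov identity I will invoke Fubini's theorem, whose hypotheses are trivial because $\varphi$, the second test function $\phi$, and both of their Fourier transforms lie in $\D(\Af)$ and are therefore compactly supported and bounded. Combining Fubini with the elementary observation $\overline{\chi(t)}=\chi(-t)$, which yields $\overline{\widehat{\phi}(\xi)}=\widehat{\overline{\phi}}(-\xi)$, and with the inversion formula applied to $\overline{\phi}$, transforms
\[
\int_{\Af}\widehat\varphi(\xi)\,\overline{\widehat\phi(\xi)}\,d\xi
= \int_{\Af}\varphi(x)\left(\int_{\Af}\overline{\widehat\phi(\xi)}\chi(\xi x)\,d\xi\right)dx
\]
into $\int_{\Af}\varphi(x)\overline{\phi(x)}\,dx$. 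I expect the only delicate point to be the bookkeeping of sign conventions and the exponents $e^{\pm\psi(\ell)}$ in the iterated Fourier transform on characteristic functions; once that computation is correctly executed, the remaining steps are routine.
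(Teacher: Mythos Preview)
Your proposal is correct and follows essentially the same route as the paper: both arguments hinge on Proposition \ref{fourier_transformDkl} and the oscillatory integral $\int_{\aid^n}\chi(-\eta z)\,dz = e^{\psi(n)}\car_{-n}(\eta)$, and both ultimately reduce the inversion formula to the spanning family $\car_\ell(\,\cdot\,-a)$. The only organizational difference is that the paper computes the inversion integral $\int_{\Af}\chi(\xi x)\widehat\varphi(-\xi)\,d\xi$ directly for a general $\varphi\in\Dkl$ via Fubini (and decomposes into characteristic functions at the last step), whereas you first pass to characteristic functions and package the computation as the identity $\F^2\varphi(x)=\varphi(-x)$; the Parseval--Steklov argument is identical in both.
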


\begin{proof} 
It has been already shown that $\F$ is a linear transformation from $\D(\Af)$ into itself. We show that the inversion formula holds. From Proposition \ref{fourier_transformDkl}, if 
$\varphi \in \Dkl$ then $\widehat{\varphi} \in \D^{-k}_{-\ell}(\Af)$. Hence

\begin{align*}
\int_{\Af} \chi(\xi x) \widehat{\varphi}(-\xi) d\xi 
&= \int_{\aid^{-\ell}} \chi(\xi x) \int_{\aid^k} \varphi(-y) \chi(\xi y) dyd\xi  \\
&= \int_{\aid^{k}} \varphi(-y) \int_{\aid^{-\ell}} \chi(\xi(x+y)) d\xi dy \\
&= \int_{\aid^{k}} \varphi(-y) e^{-\psi(\ell)} \car_\ell(x+y) dy \\
&= \sum_{\substack{a^u \in \aid^k/\aid^\ell}} \varphi(a^u) 
\int_{a^u + \aid^\ell} e^{-\psi(\ell)} \car_\ell(x+y) dy \\
&= \sum_{\substack{a^u \in \aid^k/\aid^\ell}} \varphi(a^u) \car_\ell(x-a^u) \\
&= \varphi(x).
\end{align*}

This shows that the Fourier transform $\F: \Dkl\To \D^{-k}_{-\ell}(\Af)$ is a continuous linear isomorphism, and therefore $\F$ is a continuous linear isomorphism from 
$\D(\Af)$ onto itself. From Fourier inversion formula and Fubini's theorem, the Parseval--Steklov identity follows.
\end{proof}

Recall that the Banach spaces $L^\rho(\Af)$ are separable for $ 1\leq \rho < \infty $. The Fourier transform of any integrable function can be defined by means of the same formula and the classical treatment of the Fourier theory on this space can be done as usual. In particular, $L^2(\Af)$ is a separable Hilbert space and the next statement holds: for 
$f\in L^2(\Af)$, define
\[ \F(f)(\xi) = \lim_{\ell \to \infty} \int_{\aid^\ell} \chi(x\xi) f(x) d\mu(x), \] 
where the limit is in $L^2(\Af)$.

\begin{theorem} 
The Fourier transform is a unitary transformation on the Hilbert space $L^2(\Af)$. The Fourier inversion formula and the Parseval--Steklov identity holds.
\end{theorem}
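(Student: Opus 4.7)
The plan is to extend the Fourier transform from $\D(\Af)$ to $L^2(\Af)$ by density together with the bounded linear transformation theorem, using three ingredients already established: (i) $\D(\Af)$ is dense in $L^2(\Af)$; (ii) by Theorem~\ref{inversion_formulaD}, $\F:\D(\Af)\To\D(\Af)$ is a linear bijection satisfying the Parseval--Steklov identity; and (iii) the inversion formula $\varphi(x)=\int_{\Af}\chi(\xi x)\widehat{\varphi}(-\xi)\,d\xi$ holds on $\D(\Af)$. Parseval--Steklov says $\F$ is an isometry on the dense subspace $\D(\Af)\subset L^2(\Af)$, so it admits a unique continuous extension $\widetilde{\F}:L^2(\Af)\To L^2(\Af)$, which is automatically an isometry. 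By polarization, $\widetilde{\F}$ satisfies the full Parseval--Steklov identity on $L^2(\Af)$.

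Next I would verify that this abstract extension $\widetilde{\F}$ coincides with the integral formula stated in the definition. For $f\in L^2(\Af)$, the truncations $f_\ell:=f\cdot \car_{\aid^\ell}$ have compact support and thus lie in $L^1(\Af)\cap L^2(\Af)$, and $f_\ell\to f$ in $L^2(\Af)$. Approximating each $f_\ell$ in the $L^2$ norm by elements of $\D(\aid^\ell)\subset\D(\Af)$ (whose existence is guaranteed by the density lemma proved earlier) and applying Parseval to the differences, one sees that the sequence of pointwise integrals $\int_{\aid^\ell}\chi(x\xi)f(x)\,dx=\widetilde{\F}(f_\ell)(\xi)$ is Cauchy in $L^2(\Af)$ with limit $\widetilde{\F}(f)$. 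This justifies the defining formula as an $L^2$-convergent limit.

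Finally, for unitarity and the inversion formula, I would extend the auxiliary map $\check{\F}\varphi(x):=\int_{\Af}\chi(-\xi x)\varphi(\xi)\,d\xi$ (which on $\D(\Af)$ is also an isometric bijection onto itself, by the inversion formula) to a linear isometry $\widetilde{\check{\F}}$ on $L^2(\Af)$ by the same argument. The identities $\check{\F}\circ\F=\id=\F\circ\check{\F}$ hold on the dense subspace $\D(\Af)$, and by continuity of both extensions they persist on $L^2(\Af)$. Consequently $\widetilde{\F}$ is a surjective isometry, i.e., a unitary operator, and its inverse is given by $\widetilde{\check{\F}}$, which is exactly the $L^2$-sense inversion formula. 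The main subtlety will be the matching step in the previous paragraph: checking that the pointwise integral on compactly supported $L^2$ functions really reproduces the abstract BLT extension. This is routine once one passes through the intermediate approximation by Bruhat--Schwartz functions, but it is where the explicit formula is tied to the abstract extension.
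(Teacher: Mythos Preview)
Your proposal is correct and is precisely the standard density/BLT extension argument that the paper has in mind; in fact the paper does not give a proof at all for this theorem, merely remarking beforehand that ``the classical treatment of the Fourier theory on this space can be done as usual'' and defining $\F(f)$ as the $L^2$-limit of $\int_{\aid^\ell}\chi(x\xi)f(x)\,dx$. Your write-up supplies exactly the details the paper omits, so there is nothing to compare.
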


\section[Fourier Analysis on $\A$]{Fourier Analysis on $\A$}
\label{fourier_analysisA}

This Section extends the so far discussed results on Fourier analysis to the complete ring of ad\'eles $\A$. First, we recollect several results on the harmonic analysis of the Archimedean completion $\R$ of $\Q$.

\subsection[The Archimedean place]{The Archimedean place} 
\label{analysis_R}

Recall that the real numbers $\R$ is the unique Archimedean completion of the rational numbers. As a locally compact Abelian group, $\R$ is autodual with pairing function given by $\chi_\infty(\xi_\infty x_\infty)$, where $\chi_{\infty}(x_{\infty})=e^{-2\pi i x_{\infty}}$ is the canonical character on $\R$. In addition, it is a commutative Lie group. The Schwartz space of $\R$, which we denote here by $\D(\R)$, consists of functions 
$\varphi_\infty : \R \To \C$ which are infinitely differentiable and rapidly decreasing. 
$\D(\R)$ has a countable family of seminorms which makes it a nuclear Fr\'echet space. Let $dx_{\infty}$ denotes the usal Haar measure on $\R$. The Fourier transform 
$$
\F_{\R}[\varphi_\infty](\xi_\infty) = 
\int_\R \varphi_\infty(x_\infty) \chi_\infty(\xi_\infty x_\infty) dx_\infty 
$$
is an isomorphism from $\D(\R)$ onto itself. Moreover, the Fourier inversion formula and the Parseval--Steklov identities hold on $\D(\R)$. Furthermore, $L^2(\R)$ is a separable Hilbert space, the Fourier transform is and isometry on $L^2(\R)$, and the Fourier  inversion formula and the Parseval--Steklov identity holds on $L^2(\R)$.

\begin{definition}
The \emph{\textsf{ad\`ele ring}} $\A$ of $\Q$ is defined as $\A = \R\times \Af$. 
\end{definition}

With the product topology, $\A$ is a locally compact Abelian topological ring which admits a discrete inclusion of $\Q$. Since $\Af$ is totally disconnected, $(\R,0)\subset \A$ is the connected component of zero in $\A$. A Haar measure on $\A$ can be defined as the product measure $dx= dx_\infty  dx_f$, where $dx_f$ denotes the Haar measure on $\Af$.
 
\subsection[The group of Characters of $\A$]{The group of Characters of $\A$}
\label{charactersA}

If $\tilde{\chi}_{\infty}$ and $\tilde{\chi}_{f}$ are any characters on $\R$ and $\Af$, respectively, define a character $\tilde{\chi}$ on $\A$ by 
\[ \tilde{\chi}(x)=\tilde{\chi}(x_\infty)\tilde{\chi}(x_f) \qquad (x=(x_\infty,x_f)\in \A). \]
In particular, if $\chi_\infty$ and $\chi_f$ are the canonical characters on $\R$ and $\Af$, respectively, the expression
\begin{align*}
\chi(x)	&= \chi_\infty(x_\infty)\chi_f(x_f) \\
			&= \exp(-2\pi i x_\infty)  \exp(2\pi i \{x_f\}_{\Af}) \\
			&= \exp(2\pi i(-x_\infty + \{x_f\}_{\Af}))
\end{align*}
with $x=(x_\infty,x_f)\in \A$, defines a canonical character on $\A$.

Let $\Char(\A)$ be the Pontryagin dual group of $\A$. From the last identification it follows that $\R\times \Af \subset \Char(\A)$. Moreover, since the additive structure 
$\A=\R \times \Af$ is performed componentwise, it follows that
\[ \Char(\R\times \Af) \cong \Char(\R)\times \Char(\Af) \cong \R\times \Af, \]
and therefore, $\A$ is a selfdual group in the sense of Pontryagin. Moreover, for any prescribed ad\`ele $\xi=(\xi_\infty,\xi_f)$, a character $\chi_\xi$ of $\A$ is given by:
\begin{align*}
\chi_\xi(x)	&= \chi(\xi x) \\
					&= \chi_\infty(\xi_\infty x_\infty) \chi_f(\xi_f x_f) \\
					&= \exp(-2\pi i \xi_\infty x_\infty)  \exp(2\pi i \{\xi_f x_f\}_{\Af}) \\
					&= \exp\left( 2\pi i(-\xi_\infty x_\infty + \{\xi_f x_f\}_{\Af}) \right).
\end{align*}

\subsection[Bruhat--Schwartz space on $\A$]{Bruhat--Schwartz space on $\A$}
\label{BS_spaceA}
 
Recall that the Bruhat--Schwartz space of the locally compact Abelian group $\A$ is given as follows (see \cite{Bru}). First, notice that any compact subring of $\A$ is of the form 
$\{0\}\times K$, where $K$ is a compact and open subgroup of $\Af$. Since $\A$ is an autodual group any open and compactly generated subgroup of $\A$ is of the form
$\R\times H$, where $H$ is a compact and open subgroup of $\Af$. If $K\subset H$,
$\{0\}\times K$ is contained in $\R\times H$ and the quotient group 
$\R\times H/\{0\}\times K$ is an elementary group. By definition, 
$$ 
\D(\A)=\varinjlim_{\substack{ H \subset K}   } \D\left(\R \times H/ \{0\}\times K\right). 
$$ 

This allows to describe the topology of $\D(\A)$ using the filtration $\{\aid^n\}_{n\in \Z}$. First, for any $\varphi_\infty \in \D(\R)$ and $\varphi_f \in \D(\Af)$, define a function 
$\varphi$ on $\A$ by 
\[ \varphi(x) = \varphi_\infty(x_\infty)  \varphi_f(x_f) \]
for any ad\`ele $x=(x_\infty,x_f)$. These kind of functions are continuous on $\A$ and the  linear vector space generated by these functions is linearly isomorphic to the algebraic tensor product $\D(\R)\otimes \D(\Af)$. In the following, we identify these spaces and write $\varphi=\varphi_\infty \otimes \varphi_f$.

\begin{theorem}
The space of \emph{\textsf{Bruhat--Schwartz functions}} on $\A$ is the algebraic and topological tensor product of the nuclear vector spaces $\D(\R)$ and $\D(\Af)$, i.e.
\[ \D(\A) = \D(\R) \otimes \D(\Af). \]
\end{theorem}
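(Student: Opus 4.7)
The plan is to match Bruhat's inductive-limit description of $\D(\A)$ with the (algebraic and topological) tensor product $\D(\R) \otimes \D(\Af)$, using the filtration $\{\aid^n\}_{n \in \Z}$ of $\Af$ to give a concrete cofinal system.

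First I would verify that in Bruhat's definition one may restrict the index set to the compact and open subgroups of the form $\aid^n$. Since $\{\aid^n\}_{n\in\Z}$ is a cofinal family among all compact and open subgroups of $\Af$ (Remark \ref{topology_DAf}), the pairs $\{0\}\times \aid^\ell \subset \R \times \aid^k$ with $\ell \leq k$ are cofinal among all pairs of subgroups $K \subset H$ entering the inductive limit. Consequently
$$
\D(\A) = \varinjlim_{\ell \leq k} \D\!\left( \R\times \aid^k \big/ \{0\}\times \aid^\ell \right).
$$

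Next, for fixed $\ell \leq k$, I would identify each term of this limit with a finite Fréchet-type tensor factor. The elementary group $\R \times \aid^k/\{0\}\times \aid^\ell$ is isomorphic to $\R \times (\aid^k/\aid^\ell)$, where the second factor is a finite abelian group of order $e^{\psi(k)}/e^{\psi(\ell)}$. By definition of the Bruhat--Schwartz space of an elementary group, $\D(\R\times \aid^k/\{0\}\times\aid^\ell)$ consists of functions on $\A$ of the form $\sum_u \varphi_\infty^u(x_\infty) \car_\ell(x_f - a^u)$ as $a^u$ ranges over representatives of $\aid^k/\aid^\ell$ and $\varphi_\infty^u \in \D(\R)$. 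This is exactly the algebraic tensor product $\D(\R) \otimes \Dkl$, and the natural topology on it coincides with the (unique) topological tensor product topology because $\Dkl$ is finite-dimensional.

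Passing to the inductive limit in $(\ell,k)$ and using that inductive limits commute with tensoring against a fixed space (here $\D(\R)$, which is nuclear and Fréchet), together with
$$
\D(\Af) = \varinjlim_{\ell \leq k} \Dkl,
$$
I obtain the algebraic identification $\D(\A) = \D(\R) \otimes \D(\Af)$ and the equality of the inductive-limit topology with the topology induced from the tensor product. The main subtle point—and the place where I expect the bulk of the technical work to lie—is verifying that, because both $\D(\R)$ and $\D(\Af)$ are nuclear (one Fréchet, the other a strict inductive limit of finite-dimensional spaces), the projective and injective tensor product topologies agree and are compatible with the inductive limit. This allows one to speak unambiguously of \emph{the} topological tensor product and to conclude that it coincides with $\D(\A)$ with its Bruhat topology.
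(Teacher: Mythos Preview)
Your proposal is correct and follows essentially the same route as the paper's proof: restrict Bruhat's inductive limit to the cofinal system $\{0\}\times\aid^\ell \subset \R\times\aid^k$, identify each stage with $\D(\R)\otimes\Dkl$ (using that $\Dkl$ is finite-dimensional), and then pull the fixed nuclear factor $\D(\R)$ out of the inductive limit to obtain $\D(\R)\otimes\D(\Af)$. If anything, you are slightly more explicit than the paper about why cofinality suffices and why the tensor topology at each stage is unambiguous.
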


\begin{proof} 
Since both $\D(\R)$ and $\D(\Af)$ are nuclear, the topological tensor product of these spaces is well defined. On the other hand, the Bruhat--Schwartz space $\D(\A)$ of the ring of ad\`eles  can be described as 
$$
\D(\A)=\varinjlim_{\substack{\ell,k \in\Z \\ \ell \leq k}   } \D\left(\R \times \aid^\ell /(0, \aid^k)\right). 
$$

Since $\R\times \aid^k/\{0\}\times\aid^\ell \cong \R\times (\aid^k/\aid^\ell) $ is an elementary group, for any integers $\ell \leq k$, $\D(\R \times\aid^k /(0, \aid^\ell) )$ is the algebraic and topological tensor product $\D(\R) \otimes \D(\aid^k/\aid^\ell)$ which is equal to $\D(\R) \otimes \Dkl$, where $\Dkl$ denotes the set of functions with support on $B_k\subset \Af$ and parameter of constancy $\ell$. Hence, since all the spaces involved in the next computations are nuclear, 
\begin{align}
\D(\A) &=\varinjlim_{\substack{\ell,k \in\Z \\ \ell \leq k} } \D(\R) \otimes \Dkl \notag\\
&= \D(\R) \otimes  \varinjlim_{\substack{\ell,k \in\Z \\ \ell \leq k} } \Dkl \notag\\
&= \D(\R) \otimes \D(\Af). \notag
\end{align}
\end{proof}

Let us further describe the topology of $\D(\A)$. Notice that the topological tensor product of $\D(\R)\otimes\Dkl$ is a Fr\'echet space, since $\D(\R)$ is Frechet and $\D_k^l(\Af)$, being finite dimensional, has only one seminorm. Therefore $\D(\A)$ is a nuclear LF space.

\begin{remark}
A sequence of functions $(\varphi_j)_{j\geq 1}$ in $\D(\A)$ converges to $\varphi$ if there exist $\ell,k \in \Z $ such that $\varphi_j \in \D(\R)\otimes\Dkl$ for $j$ large enough and 
$\varphi_j \To \varphi$ on the Fr\'echet space $\D(\R)\otimes\Dkl$. This can be split as
$$
\D^\ell(\A) = \varinjlim_k \D(\R)\otimes\Dkl \text{ and }\D(\A) 
= \varinjlim_\ell  \D^\ell(\A)
$$
\end{remark}
 
\subsubsection[The Fourier transform on $\A$]{The Fourier transform on $\A$}
\label{fourier_transformA}

The Fourier transforms on $\D(\R)$ and $\D(\Af)$ are, respectively, defined as:
\[ \F_{\R}[\varphi_\infty](\xi_\infty) = 
\int_\R \varphi_\infty(x_\infty) \chi_\infty(\xi_\infty x_\infty) dx_\infty \]
and
\[ \F_{\Af}[\varphi_f](\xi_f) = \int_{\Af} \varphi_f(x_f) \chi_f(\xi_f x_f) dx_f. \]

So the Fourier transform on $\D(\A)$ is described as
\[ \F[\varphi](\xi) = \int_{\A} \varphi(x) \chi(\xi x) dx, \]
for any $\xi\in \A$. It is well defined on $\D(\A)$ for any function of the form 
$\varphi=\varphi_\infty \otimes \varphi_f$ and it is given by
\[ \F[\varphi](\xi) = \F_\R[\varphi_{\infty}](\xi_\infty)\otimes \F_{\Af}(\varphi_f)(\xi_f) \] 
for any $\xi=(\xi_\infty,\xi_f)$. Succinctly, it can be written as
$\F_{\A} = \F_\R \otimes \F_{\Af}$. From Theorem \ref{inversion_formulaD} and the analogous result on the Archimedean place, the following holds.

\begin{theorem}
The Fourier transform $\F:\D(\A)\To \D(\A)$ is a linear and continuous isomorphism. The inversion formula on $\D(\A)$ reads as
\[ \F^{-1}[\varphi](\xi) = \int_{\A} \widehat{\varphi}(-\xi) \chi(\xi x) d\xi, \qquad 
(\xi \in \A), \]
and the Parseval--Steklov equality can be stated as
$$
\int_{\A} \varphi(x) \overline{\psi(x)}dx = \int_{\A} \widehat{\varphi}(\xi) \overline{\widehat{\psi}(\xi)} d\xi.
$$
\end{theorem}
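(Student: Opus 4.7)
The plan is to reduce everything to the two factor theories by exploiting the tensor product decomposition $\D(\A) = \D(\R)\otimes\D(\Af)$ together with the factorization $\F_{\A} = \F_{\R}\otimes\F_{\Af}$ of the Fourier transform. First I would check the factorization on pure tensors: for $\varphi=\varphi_\infty\otimes\varphi_f$ and $\xi=(\xi_\infty,\xi_f)$, Fubini's theorem applied to the product measure $dx=dx_\infty\,dx_f$, combined with the multiplicative factorization $\chi(\xi x)=\chi_\infty(\xi_\infty x_\infty)\chi_f(\xi_f x_f)$ of the canonical character of $\A$, gives
\[
\F[\varphi](\xi)=\F_{\R}[\varphi_\infty](\xi_\infty)\cdot\F_{\Af}[\varphi_f](\xi_f).
\]
Fubini applies because $\varphi_\infty$ is rapidly decreasing and $\varphi_f$ has compact support, so $\varphi\in L^1(\A)$. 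Extending by linearity to finite sums of pure tensors then defines $\F$ on all of $\D(\A)$.

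Next I would upgrade this to a continuous linear isomorphism using the LF-space structure recalled in Remark preceding this theorem. Fix integers $\ell\leq k$: restricted to the Fr\'echet space $\D(\R)\otimes\Dkl$, the map $\F_{\R}\otimes\F_{\Af}$ lands in $\D(\R)\otimes\D^{-k}_{-\ell}(\Af)$ by Proposition \ref{fourier_transformDkl} and classical Schwartz theory on $\R$. Since both $\F_{\R}:\D(\R)\to\D(\R)$ and $\F_{\Af}:\Dkl\to\D^{-k}_{-\ell}(\Af)$ are continuous linear isomorphisms, their tensor product is a continuous linear isomorphism of the corresponding (nuclear) Fr\'echet tensor products; passing to the inductive limit in $\ell$ and $k$ yields a continuous linear isomorphism $\F:\D(\A)\to\D(\A)$. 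The inverse is constructed in the same manner from $\F_{\R}^{-1}\otimes\F_{\Af}^{-1}$, and evaluating the composition on pure tensors reduces the inversion formula on $\A$ to the Fourier inversion formulas on $\R$ and on $\Af$ (the latter being Theorem \ref{inversion_formulaD}); on such tensors, a second application of Fubini reassembles the two factor inversions into the single integral $\int_{\A}\widehat{\varphi}(-\xi)\chi(\xi x)\,d\xi$.

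Finally, the Parseval--Steklov identity follows formally from the inversion formula by the standard argument: write $\int_{\A}\varphi(x)\overline{\psi(x)}\,dx$, replace $\varphi(x)$ by $\int_{\A}\widehat{\varphi}(\xi)\chi(-\xi x)\,d\xi$, interchange the order of integration (justified for $\varphi,\psi\in\D(\A)$ since on the relevant Fr\'echet piece $\D(\R)\otimes\Dkl$ the resulting double integral is absolutely convergent), and recognize the inner integral as $\overline{\widehat{\psi}(\xi)}$. The only real subtlety I would have to take care with is the justification of Fubini and the interchange of inductive limits with tensor products; this is handled cleanly by the nuclearity of $\D(\R)$ and $\D(\Af)$ already noted in the text, which guarantees that $\D(\R)\otimes\D(\Af)$ is unambiguous as a locally convex space and that the inductive limit can be pulled out of the tensor product as displayed in the proof of the preceding theorem.
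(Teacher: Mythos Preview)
Your proposal is correct and follows essentially the same approach as the paper, which does not give a separate proof but simply asserts that the theorem follows from Theorem~\ref{inversion_formulaD} together with the analogous Archimedean result, via the factorization $\F_{\A}=\F_{\R}\otimes\F_{\Af}$ and the decomposition $\D(\A)=\D(\R)\otimes\D(\Af)$. Your write-up supplies the details (Fubini on pure tensors, passage through the inductive limit of $\D(\R)\otimes\Dkl$, Parseval from inversion) that the paper leaves implicit.
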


The space of square integrable functions $L^2(\A)$ on $\A$ is a separable Hilbert space since it is the Hilbert tensor product space 
\[ L^2(\A) \cong L^2(\R) \otimes L^2(\Af) \] 
and $L^2(\R)$ and $L^2(\Af)$ are separable Hilbert spaces (see \cite{RS}). From this decomposition, the following fundamental and classical result is obtained.

\begin{theorem}
\label{fourier_transformA}
The Fourier transform $\F:L^2(\A)\To L^2(\A)$ is an isometry and the Fourier inversion formula and the Parseval--Steklov identities hold on $L^2(\A)$.
\end{theorem}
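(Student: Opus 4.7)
The plan is to reduce the result on $\A$ to the corresponding facts on the two factors $\R$ and $\Af$, via the Hilbert tensor product decomposition $L^2(\A) \cong L^2(\R) \otimes L^2(\Af)$. First I would recall two pieces of input already available in the paper: on the $\Af$ side, the final theorem of Section \ref{fourier_analysisAf} gives that $\F_{\Af}$ is a unitary operator on $L^2(\Af)$ satisfying the inversion formula and the Parseval--Steklov identity; on the $\R$ side, the analogous classical facts are recalled in Section \ref{analysis_R}. Thus $\F_\R$ and $\F_{\Af}$ are each unitary on their respective Hilbert spaces.

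Next I would use the general principle that if $U_1:H_1\To H_1$ and $U_2:H_2\To H_2$ are unitary operators on separable Hilbert spaces, then the tensor product operator $U_1\otimes U_2$, initially defined on elementary tensors and extended by linearity and continuity, is unitary on the Hilbert tensor product $H_1\otimes H_2$. Applied to $U_1=\F_\R$ and $U_2=\F_{\Af}$, this gives a unitary operator on $L^2(\R)\otimes L^2(\Af)\cong L^2(\A)$. To identify this operator with the Fourier transform $\F_\A$ of Section \ref{fourier_transformA}, I would verify the identification on the dense subspace of elementary tensors $\varphi_\infty\otimes\varphi_f$ with $\varphi_\infty\in \D(\R)$ and $\varphi_f\in \D(\Af)$, where Fubini's theorem together with the product form of the canonical character $\chi=\chi_\infty\chi_f$ gives
\[
\F_\A[\varphi_\infty\otimes\varphi_f](\xi_\infty,\xi_f)
= \F_\R[\varphi_\infty](\xi_\infty)\,\F_{\Af}[\varphi_f](\xi_f),
\]
and then extend by continuity using the density of $\D(\A)=\D(\R)\otimes\D(\Af)$ in $L^2(\A)$.

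From the unitarity of $\F_\A$ on $L^2(\A)$, the Parseval--Steklov identity
\[
\int_{\A}\varphi(x)\overline{\psi(x)}\,dx
= \int_{\A}\widehat\varphi(\xi)\overline{\widehat\psi(\xi)}\,d\xi
\]
for all $\varphi,\psi\in L^2(\A)$ is immediate, as is its $L^2$-norm specialization. For the inversion formula, I would proceed by density: on $\D(\A)$ it holds by the previous theorem, and since $\F_\A$ is an isometry on $L^2(\A)$, the formula extends to all of $L^2(\A)$ in the $L^2$-limit sense, interpreting the defining integral as the $L^2$-limit over the exhaustion by the compact-open subgroups $\R\times\aid^\ell$ in analogy with the $\Af$ case.

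The main obstacle is essentially bookkeeping rather than a genuine analytic difficulty: one must justify the identification of the operator $\F_\R\otimes \F_{\Af}$ built abstractly on the Hilbert tensor product with the concretely defined $\F_\A$, and in particular check that the limit used to define $\F_\A$ on $L^2(\A)$ agrees with the tensor product extension. This is handled by verifying agreement on the dense subspace $\D(\R)\otimes \D(\Af)=\D(\A)$ and invoking the uniqueness of continuous extensions of bounded operators, so no additional machinery beyond what is already established in the paper is required.
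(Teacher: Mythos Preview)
Your proposal is correct and follows precisely the route the paper indicates: the paper does not give a detailed proof of this theorem but simply asserts that it is obtained from the Hilbert tensor product decomposition $L^2(\A)\cong L^2(\R)\otimes L^2(\Af)$, and your argument supplies exactly the standard details behind that assertion (unitarity of $\F_\R\otimes\F_{\Af}$, identification with $\F_\A$ on the dense subspace $\D(\A)$, extension by continuity). One small terminological slip: the sets $\R\times\aid^\ell$ are open subgroups but not compact, so the $L^2$-limit should be phrased over an exhaustion by compact sets such as $[-N,N]\times\aid^\ell$; this does not affect the substance of your argument.
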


\section{Acknowledgements}

The first and third authors would like to thank Wilson A. Zu\~niga Galindo
for very useful discussions. Work was partially supported by   FORDECYT 265667.

\vspace{2cm}

\end{document}